\renewcommand{\@seccntformat}[1]{\bf\csname the#1\endcsname.}
\renewcommand{\section}{\@startsection{section}{1}
	\z@{.7\linespacing\@plus\linespacing}{.5\linespacing}
	{\normalfont\upshape\bfseries\centering}}
\renewcommand{\@biblabel}[1]{\@ifnotempty{#1}{#1.}}
\theoremstyle{plain}
\newtheorem{thm}{Theorem}[section]
\newtheorem{lem}[thm]{Lemma}
\newtheorem{proposition}[thm]{Proposition}
\newtheorem{cor}[thm]{Corollary}
\theoremstyle{definition}
\newtheorem{definition}[thm]{Definition}
\newtheorem{rem}{Remark}[section]
\def \>{\succ}
\def \<{\prec}
\begin{document}	

\title[Imed Basdouri \textsuperscript{1},Mariem Jendoubi\textsuperscript{2}, Ahmed Zahari Abdou Damdji.\textsuperscript{3}]{Related Hom-rhizaform algebras and Rota-Baxter Operators, Hom-Rhizaform  Family Algebras}
	\author{Imed Basdouri\textsuperscript{1},Mariem Jendoubi\textsuperscript{2},  Ahmed Zahari Abdou Damdji.\textsuperscript{3}}

		\address{\textsuperscript{1}D\'epartement de Math\'ematiques, Faculty of Sciences,University of Sfax,Sfax,Tunisia}
        \address{\textsuperscript{2}D\'epartement de Math\'ematiques, Faculty of Sciences,University of Sfax,Sfax,Tunisi}
        \address{\textsuperscript{3}IRIMAS-Department of Mathematics, Faculty of Sciences, University of Haute Alsace, Mulhouse, France}
	
\email{\textsuperscript{1}mariemj090@gmail.com}
\email{\textsuperscript{2}basdourimed@yahoo.fr}
\email{\textsuperscript{3}abdou-damdji.ahmed-zahari@uha.fr}
	
	\keywords{Connes cocycles, Hom-Rhizaform family Algebras, Nilpotent Hom-rhizaform algebras,Hom-anti-associative algebra, Classification}
	\subjclass[2010]{17A30\and 17A36\and 17B56\and 16W25\and 16S80\and 17D25.}
	
	\date{\today}
\begin{abstract} 
This paper explores the link between Hom-rhizaform algebras and Rota-Baxter operators. We define a new structure, the Hom-rhizaform family algebra, which is a more general version of the 
Hom-rhizaform algebra. The main finding is that Rota-Baxter operators can be used to construct new Hom-rhizaform algebras. This work expands the theory of Hom-algebras by showing a new 
way to apply Rota-Baxter operators. Finally, we establish the classification of these algebras as well as their corresponding cocycle cones.
 \end{abstract}
\maketitle 

\section{Introduction}\label{introduction}
The concept of Rota-Baxter operators on associative algebras was introduced in the 1960th by G.Baxter \cite{GB} in the study of fluctuation theory in Probability. Uchino introduced the notion of generalized Rota-Baxter operator (called also O-operator) on a bimodule over an associative algebra in \cite{A,Makhlouf2020,AO,Zahari2025, Basdouri2025}. 
Rota-Baxter operators and their generalization O-operators are
 becoming rather popular due to the irinterest in various domain like nonassociative algebra and applied algebra.
Let $(M,r,l,\beta)$ be an $\mathscr{A}$-bimodule, and let $\pi : M \to \mathscr{A}$ a linear map is defined to be the identity, 

\begin{equation}\label{eq001}
\pi(m) \cdot \pi(n) = \pi\big( l(\pi(m))\,n + r(\pi(n))\,m \big).
\end{equation}

 When an operator $\pi : M \to A $ satisfies the condition \eqref{eq001}, we say that $\pi$ is a generalized Rota-Baxter operator, or shortly, $O$-operator. When $M = \mathscr{A}$  is reduced 
to a Rota-Baxter condition  $R(x) \cdot R(y) = R(R(x) \cdot y +x\cdot R(y))$. The $R : \mathscr{A} \to \mathscr{A}$ is called a Rota-Baxter operator of “weight zero”,
or simply Rota-Baxter operator. In general, Rota-Baxter operators are defined by the weighted formula, $R(x)\cdot R(y)=R(R(x)\cdot y+x\cdot R(y))-q\cdot R(x\cdot y)$,
where $q$ is a scalar (weight). The Rota-Baxter operators of $q=0$ are special examples. In the following, we assume the weight of Rota-Baxter operator is zero.
 Rhizaform algebras are algebras with two operations, whose sum satisfies an anti-associative identity (i.e. $(x\cdot y)\cdot z=-x\cdot(y\cdot z)$). Moreover, 
the left and right multiplication operators forming bimodules over the anti-associative algebra $(\mathscr{A},\cdot)$. This characterization parallels that of dendriform algebras,
 where the sum of operations preserves associativity. In \cite{R, IA, Makhlouf2020}, it was explained how one may associate a rhizaform algebra to any antiassociative algebra equipped with a 
Rota-Baxter operator, i.e., it was shown that if $\beta :\mathscr{A}\to \mathscr{A}$ is a Rota-Baxter operator then the pair of multiplications $x\succ y:=\beta(x)y$ and 
$x\prec y:=x\beta(y)$ is a rhizaform structure on A. The operations on an algebra may be replaced by a family of operations indexed  by a semigroup, giving rise to the same algebraic 
structure in family versions. The notion of Rota-Baxter family appeared in 2007 is the first example of this  situation \cite{I, B, GB} (see also \cite{LG, Zahari2023, BR}) about algebraic aspects of renormalization  in Quantum  Field Theory. This terminology was suggested to the authors by Li Guo, who  further duscussed the underlying structure under the name Rota-Baxter Family
 algebra \cite{I, Mosbahi2023, BW} which is a generalization of Rota-Baxter algebras. Since then, various other kinds of family algebraic structures have been defined
 \cite{F, YZ1, YZ2, YD, Zahari2023}.
This article aims to extend the notion of rhizaform algebras and rhizaform family algebras to the Hom-setting.
Recall that the theory of Hom-algebras first appeared in the study of quasi-deformations of Lie algebras of vector fields, including q-deformations of Witt algebras and Virasoro algebras with the introduction of Hom-Lie algebras by J.T. Hartwig, D. Larsson, and S.D. Silvestrov \cite{HL, HJ} which offer an approach to addressing anti associativity. 
The paper is organized as follows : In section two, we introduce the notions of hom-rhizaform algebras as an approach of splitting the Hom-anti-associativity.
The notions of O-operators and Rota-Baxter operators on Hom-anti-associative algebras are introduced to interpret hom-rhizaform algebras in section 3. In section 4 The relationships between hom-rhizaform algebras, commutative Connes cocycles on Hom-anti-associative algebras are given. In the Section 5, we introduce the notions of nilpotency for Hom-rhizaform algebras. Moreover, we give the classification of 2-dimensional and the classification of 3-dimensional Hom-rhizaform algebras over the field of complex numbers. In the last Section 5, we introduce the concepts of Hom-rhizaform family algebras and  Hom-anti-associative family algebras, and we provide the relations between these family Hom-algebras.
\section{Hom-anti-associative and Hom-rhizaform algebras}
\begin{definition}
A Hom-algebra $(\mathscr{A}, \ast , \alpha)$ over a field  $\mathbb{F}$ is called a Hom-Jacobi-Jordan algebra if it  satisfies  the following two identities 

\begin{align}
x\ast y-y\ast x=0\label{eq1},\\
\alpha(x) \ast (y \ast z)+ \alpha(y)\ast(z\ast x)+\alpha(z)\ast(x\ast y)=0\label{eq2},
\end{align}
for all $x, y,z\in\mathscr{A}.$
\end{definition}
\begin{definition}
  $(\mathscr{A}, \circ, \alpha)$ is called a \textit{Hom-Jacobi-Jordan-admissible algebra}, where $\mathscr{A}$ is a vector space equipped with a bilinear operation $\circ : \ \mathscr{A} \times \mathscr{A} \to \mathscr{A}$ and a linear map $\alpha :\ \mathscr{A}\to \mathscr{A}$, if the bilinear operation $[,] : \ \mathscr{A} \times \mathscr{A} \to \mathscr{A}$ defined by
\[
[x, y] = x \circ y + y \circ x, \quad \forall x, y \in \mathscr{A},
\]
makes $(\mathscr{A}, [,], \alpha)$ a Hom-Jacobi-Jordan algebra. This algebra $(\mathscr{A}, [,], \alpha)$ is called the \textit{Hom sub-adjacent Jacobi-Jordan algebra} of $(\mathscr{A}, \circ, \alpha)$ and is denoted by $(\mathcal{J}(A), [,], \alpha)$.
\end{definition}

\begin{definition}
A Hom-pre-Jacobi-Jordan algebra $(\mathscr{A},\ \cdot,\ \alpha)$ is a vector space with a binary operation\\ $(x,\ y)\mapsto xy$  satisfying 

\begin{align}
(xy)\alpha(z)+\alpha(x)(yz)+(yx)\alpha(z)+\alpha(y)(xz)=0\label{eq3}
\end{align}
for all $x, y,z\in\mathscr{A}.$
\end{definition}
\begin{definition}
  An Hom anti-associative algebra $(\mathscr{A},\ \cdot,\ \alpha)$ is a nonassociative algebra whose multiplication satisfies the identity   
  \begin{align*}
 \alpha(x) \cdot (y \cdot z) = - (x \cdot y) \cdot \alpha(z), \quad \forall x,\ y,\ z \in\mathscr{A}.
  \end{align*}
\end{definition}
\begin{definition}
 Let \( \mathscr{A} \) be a vector space with two bilinear operations 
\[
\succ: \ \mathscr{A} \times \mathscr{A} \to \mathscr{A}, \quad \prec:\ \mathscr{A} \times \mathscr{A} \to \mathscr{A},\] and a linear map $\alpha:\ \mathscr{A}\to \mathscr{A}$. Define a bilinear operation \( \ast \) as
\begin{equation}
    x \ast y = x \succ y + x \prec y, \quad \forall x, y \in \mathscr{A}.\label{eq0}
\end{equation}
The triple \( (\mathscr{A}, \succ, \prec,\alpha) \) is called an \emph{Hom-anti-associative admissible algebra} if \( (\mathscr{A}, \ast,\alpha) \) is an Hom-anti-associative algebra. In this case, \( (\mathscr{A}, \ast,\alpha) \) is called the \emph{associated anti-associative algebra} of \( (\mathscr{A}, \succ, \prec,\alpha) \).
\end{definition}
\begin{rem}
The quadriple $(\mathscr{A}, \prec, \succ, \alpha)$ is a Hom-anti-associative admissile algebra if and only if the following equation holds : 
\begin{align}
&(x\succ y)\succ\alpha(z)+(x\prec y)\succ\alpha(z)+(x\succ y)\prec(\alpha(z)+(x\prec y)\prec \alpha(z)\label{eq4}\\
&=-\alpha(x)\succ(y \succ z)-\alpha(x)\succ(y\prec z)-\alpha(x)\prec(y\succ z)- \alpha(x)\prec(y\prec z),\,\forall\, x,y,z\in \mathscr{A}.\nonumber 
\end{align}
\end{rem}

\begin{definition}\label{def0}
A  Hom-rhizaform algebra is a quadruple $(\mathscr{A}, \prec, \succ, \alpha)$ consisting of a vector space $\mathscr{A}$,two linear operations$
\succ: \ \mathscr{A} \otimes \mathscr{A} \to \mathscr{A}, \quad \prec:\  \mathscr{A} \otimes A \to A$ and a linear map $\alpha:\ \mathscr{A}\to \mathscr{A}$  satisfying

\begin{align}
\alpha\circ\prec=\prec\circ\alpha^{\otimes}\,&\text{and}\,\alpha\circ\succ=\succ\circ\alpha^{\otimes} \\
(x\succ y+x\prec y)\succ\alpha(z)&=-\alpha(x)\succ(y\succ z) \label{req1}\\
\alpha(x) \prec (y \succ z + y \prec z)&=-(x\prec y)\prec\alpha(z)\label{req2},\\
 \alpha(x) \succ (y \prec z)&=-(x\succ y) \prec \alpha(z)\label{eq8},
\end{align}
for all $x,y,z\in \mathscr{A}.$
We called $\alpha$ ( in this order ) the structure maps of $\mathscr{A}$.
\end{definition}

\begin{definition}
A Hom-dendriform algebra is a quadruple $(\mathscr{A}, \prec, \succ, \alpha)$ consisting of a vector space $\mathscr{A}$,two linear operations 
$\succ: \ \mathscr{A} \otimes \mathscr{A} \to A, \quad \prec:\ \mathscr{A} \otimes \mathscr{A} \to \mathscr{A}$ and a linear map $\alpha:\ \mathscr{A}\to \mathscr{A}$ satisfying
\begin{align}
\alpha\circ\prec=\prec\circ\alpha^{\times}\,&\text{and}\,\alpha\circ\succ=\succ\circ\alpha^{\times} \\
(x \prec y) \prec \alpha(z)&=\alpha(x) \prec (y \prec z + y \succ z)\label{eq5},\\
 (x \succ y) \prec \alpha(z)&=\alpha(x) \succ (y \prec z)\label{eq6},\\
 (x \prec y + x \succ y) \succ \alpha(z)&=\alpha(x) \succ (y \succ z)\label{eq7}.
\end{align}
for all $x,y,z\in \mathscr{A}.$
We called $\alpha$ ( in this order ) the structure maps of $\mathscr{A}$.
\end{definition}

\begin{definition}
If $(\mathscr{A}_1, \succ_1, \prec_1, \alpha_1)$, $(\mathscr{A}_2, \succ_2, \prec_2, \alpha_2)$  are Hom-rhizaform (Hom-dendriform) algebra. 
Then, a fonction $\pi$ from $\mathscr{A}_1\longrightarrow \mathscr{A}_2$ is homomorphism if $\alpha_2\circ \pi=\pi\circ \alpha_1$,
$$
\pi(x\succ_1 y)=\pi(x)\succ_2\pi(y)\quad \txt{and} \quad \pi(x\prec_1 y)=\pi(x)\prec_2\pi(y)\, \forall\, x,y\in\mathscr{A}. 
$$
\end{definition}

\begin{thm}
  Let $(\mathscr{A}, \succ, \prec, \alpha)$ be a Hom-rhizaform algebra. Then the following statements hold:
\begin{itemize}
    \item[(i)] Define a bilinear operation $\ast$ by Equation \eqref{eq0}. Then $(\mathscr{A}, \ast, \alpha)$ is a Hom-anti-associative algebra, called the \textit{associated Hom-anti-associative algebra} of $(\mathscr{A}, \succ, \prec, \alpha)$. Furthermore, $(\mathscr{A}, \succ, \prec, \alpha)$ is called a \textit{compatible Hom-rhizaform algebra structure} on $(\mathscr{A}, \ast, \alpha)$.
    
    \item[(ii)] The bilinear operation $\circ : \mathscr{A} \times \mathscr{A} \to \mathscr{A}$ given by
    \[
    x \circ y = x \succ y - y \prec x, \quad \forall x, y \in \mathscr{A},
    \]
    defines a Hom-pre-Jacobi-Jordan algebra, called the \textit{Hom-associated pre-Jacobi-Jordan algebra} of $(\mathscr{A}, \succ, \prec, \alpha)$.
    
    \item[(iii)] Both $(\mathscr{A}, \ast, \alpha)$ and $(\mathscr{A}, \circ, \alpha)$ have the same Hom-sub-adjacent Jacobi-Jordan algebra $(\mathfrak{g}(\mathscr{A}), [,], \alpha)$ defined by
    \[
    [x, y] = x \succ y + x \prec y + y \succ x + y \prec x, \quad \forall x, y \in \mathscr{A}.
    \]
\end{itemize}  
\end{thm}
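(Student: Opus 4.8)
The plan is to obtain all three parts by direct expansion, using only the three splitting identities \eqref{req1}, \eqref{req2} and \eqref{eq8}. For (i) I would verify the Hom-anti-associativity of $\ast$ head-on: writing $\ast=\succ+\prec$ via \eqref{eq0}, the left-hand side $\alpha(x)\ast(y\ast z)$ expands to $\alpha(x)\succ(y\succ z)+\alpha(x)\succ(y\prec z)+\alpha(x)\prec(y\succ z+y\prec z)$, and the right-hand side $(x\ast y)\ast\alpha(z)$ expands to $(x\succ y+x\prec y)\succ\alpha(z)+(x\succ y)\prec\alpha(z)+(x\prec y)\prec\alpha(z)$. Applying \eqref{req1} to the first right-hand summand, \eqref{eq8} to the second, and \eqref{req2} to the third rewrites each of them as minus the matching left-hand summand; adding the three identities gives $(x\ast y)\ast\alpha(z)=-\,\alpha(x)\ast(y\ast z)$, as required.

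For (ii) I would expand the Hom-pre-Jacobi-Jordan identity \eqref{eq3} for $\circ$, that is $(x\circ y)\circ\alpha(z)+\alpha(x)\circ(y\circ z)+(y\circ x)\circ\alpha(z)+\alpha(y)\circ(x\circ z)$, by substituting the definition of $\circ$ in terms of $\succ$ and $\prec$; this produces sixteen $\succ/\prec$-monomials. The idea is then to sort them into three packets according to their shape. The monomials of the shapes $(\,\cdot\succ\cdot\,)\succ\alpha(\cdot)$, $(\,\cdot\prec\cdot\,)\succ\alpha(\cdot)$ and $\alpha(\cdot)\succ(\,\cdot\succ\cdot\,)$ reassemble into two copies of \eqref{req1} (with $x,y,z$ permuted as $(x,y,z)$ and $(y,x,z)$) and therefore sum to zero. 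The monomials of the shapes $\alpha(\cdot)\succ(\,\cdot\prec\cdot\,)$ and $(\,\cdot\succ\cdot\,)\prec\alpha(\cdot)$ reassemble into two copies of \eqref{eq8} and sum to zero. The remaining monomials, of the shapes $\alpha(\cdot)\prec(\,\cdot\succ\cdot\,)$, $\alpha(\cdot)\prec(\,\cdot\prec\cdot\,)$ and $(\,\cdot\prec\cdot\,)\prec\alpha(\cdot)$, reassemble into two copies of \eqref{req2} and sum to zero. Hence \eqref{eq3} holds and $(\mathscr{A},\circ,\alpha)$ is a Hom-pre-Jacobi-Jordan algebra.

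For (iii) I would first note that the two candidate sub-adjacent brackets coincide: by construction the Hom-sub-adjacent bracket of $(\mathscr{A},\ast,\alpha)$ is $x\ast y+y\ast x$, and that of $(\mathscr{A},\circ,\alpha)$ is $x\circ y+y\circ x$, and expanding both through \eqref{eq0} and the definition of $\circ$ yields one and the same operation $[x,y]=x\succ y+x\prec y+y\succ x+y\prec x$, carried by the same $\alpha$. It then remains to check that $(\mathscr{A},[,],\alpha)$ is a Hom-Jacobi-Jordan algebra; commutativity \eqref{eq1} is immediate from the symmetry of the formula, while for the Hom-Jacobi identity \eqref{eq2} I would combine part (i) with the auxiliary fact that the symmetrized bracket $x\ast y+y\ast x$ of any Hom-anti-associative algebra satisfies \eqref{eq2}. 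That auxiliary fact is itself proved by expanding the three cyclic terms of \eqref{eq2} for $[,]$, using Hom-anti-associativity to turn every $(a\ast b)\ast\alpha(c)$ into $-\,\alpha(a)\ast(b\ast c)$, and noticing that the twelve terms produced cancel in pairs. (Alternatively, one may deduce \eqref{eq2} from part (ii) together with the routine statement that a Hom-pre-Jacobi-Jordan algebra is Hom-Jacobi-Jordan-admissible with the symmetrized product.)

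The step I expect to be the real obstacle is the monomial bookkeeping in (ii): one has sixteen terms, and the delicate point is to feed each of \eqref{req1}, \eqref{req2}, \eqref{eq8} the correct permutation of $x,y,z$ and to keep track of all the signs so that the three packets genuinely close up; the shorter twelve-term cancellation needed for \eqref{eq2} in (iii) is the only other computation of any length, and (i) is little more than a single substitution.
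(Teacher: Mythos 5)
Your overall strategy --- proving (i), (ii), (iii) by direct expansion against \eqref{req1}, \eqref{req2}, \eqref{eq8}, and getting (iii) from (i) plus the fact that the symmetrized product of a Hom-anti-associative algebra satisfies \eqref{eq2} (your twelve-term pairwise cancellation is correct) --- is the same route the paper takes. The genuine gap sits exactly at the step you flagged as delicate. With $\circ$ as literally defined in the statement, $x\circ y=x\succ y-y\prec x$, your packets do \emph{not} close: expanding the four terms of \eqref{eq3} gives sixteen monomials in which every monomial of shape $(\cdot\prec\cdot)\succ\alpha(\cdot)$ and $\alpha(\cdot)\prec(\cdot\succ\cdot)$ carries a minus sign, whereas two copies of \eqref{req1} (resp.\ \eqref{req2}) require them with a plus sign; only your \eqref{eq8}-packet cancels. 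Carrying the computation through, the left-hand side of \eqref{eq3} reduces not to $0$ but to the residue
\[
-2\bigl[(x\prec y+y\prec x)\succ\alpha(z)+\alpha(z)\prec(x\succ y+y\succ x)\bigr],
\]
and nothing in \eqref{req1}--\eqref{eq8} forces this expression to vanish. The same sign also defeats the first step of your (iii): with the minus convention one gets $x\circ y+y\circ x=x\succ y+y\succ x-x\prec y-y\prec x$, which is \emph{not} the stated bracket $[x,y]$, so the two sub-adjacent algebras would not coincide.

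The cure is to take $x\circ y=x\succ y+y\prec x$ (the sign flips relative to the dendriform/pre-Lie situation because the splitting identities \eqref{req1}--\eqref{eq8} already carry minus signs). With that convention your bookkeeping closes exactly as you describe: the $\succ$-packet is two copies of \eqref{req1} at $(x,y,z)$ and $(y,x,z)$, the mixed packet is two applications of \eqref{eq8}, the $\prec$-packet is two copies of \eqref{req2} at $(z,y,x)$ and $(z,x,y)$, and moreover $x\circ y+y\circ x=x\ast y+y\ast x$, so (iii) follows by your argument. Be aware that the paper's own proof stumbles on the same point: its opening expansions use the minus convention, while its final equality and the line asserting $x\circ y+y\circ x=x\ast y+y\ast x$ are valid only for the plus convention. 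So as written your proposal asserts cancellations that fail for the stated $\circ$; either correct the sign in the definition of $\circ$ (and then your proof is complete and matches the paper's intent), or record the residue above as showing that the literal statement is not a consequence of the axioms.
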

\begin{proof}
  \begin{itemize}
    \item[(i)] This is straightforward.
 \item[(ii)] Let $x, y, z \in \mathscr{A}$. Then we have
    \begin{align*}
    (x \circ y) \circ \alpha(z) 
    &= (x \succ y - y \prec x) \succ \alpha(z) - \alpha(z) \prec (x \succ y - y \prec x) \\
    &= (x \succ y) \succ \alpha(z) - (y \prec x) \succ \alpha(z) - \alpha(z) \prec (x \succ y) + \alpha(z) \prec (y \prec x), \\
    \alpha(x) \circ (y \circ z) 
    &= \alpha(x) \succ (y \succ z - z \prec y) - (y \succ z - z \prec y) \prec \alpha(x) \\
    &= \alpha(x) \succ (y \succ z) - \alpha(x) \succ (z \prec y) - (y \succ z) \prec \alpha(x) + (z \prec y) \prec \alpha(x).
    \end{align*}

    By swapping $x$ and $y$, we have
    \begin{align*}
    (y \circ x) \circ \alpha(z) 
    &= (y \succ x) \succ \alpha(z) - (x \prec y) \succ \alpha(z) - \alpha(z) \prec (y \succ x) + \alpha(z) \prec (x \prec y), \\
    \alpha(y) \circ (x \circ z) 
    &= \alpha(y) \succ (x \succ z) - \alpha(y) \succ (z \prec x) - (x \succ z) \prec \alpha(y) + (z \prec x) \prec \alpha(y).
    \end{align*}

    Using Eqs. \ref{req1},\ref{req2},\ref{eq8} we obtain
    \begin{align*}
    &\alpha(x) \circ (y \circ z) + \alpha(y) \circ (x \circ z) \\
    &= \alpha(x) \succ (y \succ z) - \alpha(x) \succ (z \prec y) - (y \succ z) \prec \alpha(x) + (z \prec y) \prec \alpha(x) \\
    &\quad + \alpha(y) \succ (x \succ z) - \alpha(y) \succ (z \prec x) - (x \succ z) \prec \alpha(y) + (z \prec x) \prec \alpha(y) \\
    &= \alpha(x) \succ (y \succ z) + (z \prec y) \prec \alpha(x) + \alpha(y) \succ (x \succ z) + (z \prec x) \prec \alpha(y) \\
    &= - (x \succ y + x \prec y) \succ \alpha(z) - \alpha(z) \prec (y \succ x + y \prec x) \\
    &\quad - (y \succ x + y \prec x) \succ \alpha(z) - \alpha(z) \prec (x \succ y + x \prec y) \\
    &= - (y \circ x) \circ \alpha(z) - (x \circ y) \circ \alpha(z).
    \end{align*}
    Moreover, we have
    \[
    x \circ y + y \circ x = x \succ y + y \prec x + x \prec y + y \succ x = x \ast y + y \ast x, \quad \forall x, y \in \mathscr{A}.
    \]

    Thus, $(\mathscr{A}, \circ, \alpha)$ is a Hom-Jacobi-Jordan-admissible algebra and hence a Hom-pre-Jacobi-Jordan algebra.
    \item[(iii)] This is straightforward. Note that it also appears in the proof of (ii).
\end{itemize}  
	This ends the proof.
\end{proof}

\begin{definition}
Suppose $(\mathscr{A}, \ast, \alpha)$  is a Hom-rhizaform algebra over $\mathbb{F}.$ For a fixed\\ $z\in\mathscr{A}, ad_z(x)=z\ast x-x\ast z\, \forall x\in \mathscr{A}.$
Then, $ad_z$ is a $\alpha$-derivation referred to as  inner derivation of $\mathscr{A}$ where the $\ast$ is considered for the two products $\prec$ and $\succ.$ 
$Inner(\mathscr{A})$ is a usual notation for the set of all inner derivation of $\mathscr{A}$.
\end{definition}

\begin{proposition}
Suppose $(\mathscr{A},\ast, \alpha)$ is a Hom-Rhizaform algebra over a field $\mathbb{K}.$ For $z\in\mathscr{A},$ set\\ $ad_z(x)=z\prec x-x\succ z.$
Then, $ad_z$ is a derivation of $\mathscr{A}.$  
\end{proposition}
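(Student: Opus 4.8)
The plan is to check directly that $D:=ad_z$ meets the defining conditions of an $\alpha$-derivation of $\mathscr{A}$: compatibility with the structure map, $D\circ\alpha=\alpha\circ D$, together with the twisted Leibniz rule for the operations of $\mathscr{A}$.

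Compatibility with $\alpha$ is the routine half. From the structure-map identities $\alpha\circ\prec=\prec\circ\alpha^{\otimes}$ and $\alpha\circ\succ=\succ\circ\alpha^{\otimes}$ one reads off
\[
\alpha\big(D(x)\big)=\alpha(z)\prec\alpha(x)-\alpha(x)\succ\alpha(z),\qquad D\big(\alpha(x)\big)=z\prec\alpha(x)-\alpha(x)\succ z ,
\]
so the two coincide once $z$ is taken $\alpha$-invariant (or in the $\alpha$-twisted sense of derivation used in the preceding definition); no rhizaform axiom is needed here.

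The Leibniz rule is where the work lies. I would establish it for each of the two products $\succ$ and $\prec$ (equivalently, for the associated Hom-anti-associative product $\ast=\succ+\prec$); the $\ast$-version,
\[
D(x\ast y)=D(x)\ast\alpha(y)+\alpha(x)\ast D(y),\qquad x,y\in\mathscr{A},
\]
is representative. The strategy: expand the left-hand side as $z\prec(x\ast y)-(x\ast y)\succ z$ and use \eqref{req2} to rewrite the piece $z\prec(x\succ y+x\prec y)$ and \eqref{req1} to rewrite the piece $(x\succ y+x\prec y)\succ z$, which reduces the left-hand side to a short combination of triple products. Then expand the right-hand side by distributing $\ast$ over $\succ$ and $\prec$ into eight triple products, and transport each of them into that same shape by a single application of one of the three defining identities \eqref{req1}, \eqref{req2}, \eqref{eq8} --- here \eqref{req1} and \eqref{req2} absorb the pure-$\succ$ and pure-$\prec$ terms, while the ``mixed'' identity \eqref{eq8}, which trades $\alpha(x)\succ(u\prec v)$ for $-(x\succ u)\prec\alpha(v)$, does the decisive matching. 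The superfluous terms then cancel.

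The step I expect to be the genuine obstacle is organisational, not conceptual: the expansions produce on the order of a dozen $\alpha$-twisted triple products, and the calculation closes up only if, for each term, one selects the correct identity among \eqref{req1}, \eqref{req2}, \eqref{eq8} and applies it in the correct direction; a careless choice spawns new terms rather than cancelling old ones. A parallel point to keep under control is the placement of the Hom-twists: each use of a rhizaform axiom deposits an $\alpha$ on exactly one of the three factors, so one must repeatedly invoke $\alpha\circ\prec=\prec\circ\alpha^{\otimes}$ and $\alpha\circ\succ=\succ\circ\alpha^{\otimes}$ to move those twists where they are needed. Once this bookkeeping is arranged, nothing beyond the three identities of Definition \ref{def0} is used, and the identical scheme handles the $\succ$- and $\prec$-Leibniz rules separately.
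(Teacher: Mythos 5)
Your plan follows the same route as the paper's own argument (a direct verification of the $\alpha$-twisted Leibniz rules for $\prec$ and $\succ$ from \eqref{req1}, \eqref{req2}, \eqref{eq8}), but the step you dismiss as ``organisational'' bookkeeping is precisely where the argument breaks. If you actually expand $ad_z(x\ast y)-ad_z(x)\ast\alpha(y)-\alpha(x)\ast ad_z(y)$ (or the $\succ$- and $\prec$-versions) and apply the three axioms wherever they are applicable, you are left with terms such as $2\,z\prec(x\prec y)$, $z\prec(x\succ y)$, $(x\succ z)\succ y$ that no identity removes: \eqref{req1} only controls the combination $(x\succ y+x\prec y)\succ\alpha(z)$, never $(x\prec y)\succ\alpha(z)$ or $(x\succ y)\succ\alpha(z)$ separately, and similarly \eqref{req2} only controls $\alpha(x)\prec(y\succ z+y\prec z)$. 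So the ``superfluous terms'' do not cancel; they are a genuine obstruction (they vanish only when all triple products vanish, i.e.\ in the $2$-nilpotent situation treated later in the paper). Two smaller gaps: being a derivation for $\ast=\succ+\prec$ does not imply the separate $\succ$- and $\prec$-Leibniz rules, so the $\ast$-version is not ``representative''; and your compatibility $ad_z\circ\alpha=\alpha\circ ad_z$ silently requires $\alpha(z)=z$, an hypothesis not in the statement.

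In fact the statement cannot be proved because it is false as it stands, so no arrangement of \eqref{req1}, \eqref{req2}, \eqref{eq8} can close your computation. Take $\mathscr{A}=\operatorname{span}(e_1,e_2,e_3)$, $\alpha=\mathrm{id}$, $\prec\equiv 0$, and $e_1\succ e_1=e_2$, $e_1\succ e_2=e_3$, $e_2\succ e_1=-e_3$ (all other products zero); the rhizaform axioms reduce to anti-associativity of $\succ$, which holds, so this is a multiplicative Hom-rhizaform algebra. Then $ad_{e_1}(x)=-x\succ e_1$ gives
\begin{align*}
ad_{e_1}(e_1\succ e_1)=ad_{e_1}(e_2)=e_3,\qquad
ad_{e_1}(e_1)\succ e_1+e_1\succ ad_{e_1}(e_1)=e_3-e_3=0,
\end{align*}
so the Leibniz rule fails for $\succ$ (and for $\ast$), and the same happens with the opposite convention $ad_z(x)=z\succ x-x\prec z$ used inside the paper's computation. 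This also explains why the paper's proof only appears to work: it switches between inconsistent definitions of $ad_z$ and its intermediate lines contain unjustified term replacements, i.e.\ it asserts exactly the cancellation that your sketch defers. A correct result here needs either additional hypotheses (for instance $x\succ y+x\prec y=0$, which forces $2$-nilpotency and makes all triple products vanish) or a different candidate map.
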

\begin{proof}
We need to show that $ad_z$ is an $\alpha$-derivation of $(\mathscr{A}, \ast, \alpha)$. For any $x,y\in \mathscr{A},$ we have
\begin{align*}
ad_z(x\prec y)&=\alpha(z)\succ(x\prec y)-(x\prec y)\prec \alpha(z)
\end{align*}
and
\begin{align*}
ad_z(x)\prec\alpha(y)+\alpha(x)\prec ad_z(y)&=(z\succ x-x\prec z)\prec\alpha(y)+\alpha(x)\prec(z\succ y-y\prec z)\\
&=\alpha(z)\succ(x\succ y)-\alpha(x)\prec(z\prec y)+\alpha(x)\prec(z\succ y)-\alpha(x)\prec(z\prec y)\\
&=\alpha(z)\succ(x\succ y)-\alpha(x)\prec(z\prec y)+\alpha(x)\prec(z\succ y)-\alpha(x)\prec(z\prec y)\\
&=\alpha(z)\succ(x\prec y)-(x\prec y)\prec\alpha(z).
\end{align*}
Hence $ad_z(x\prec y)=ad_z(x)\prec\alpha(y)+\alpha(x)\prec ad_z(y).$

On the other hand, 
\begin{align*}
ad_z(x\succ y)&=(x\succ y)\succ\alpha(z)-\alpha(z)\succ(x\succ y),
\end{align*}
and 
\begin{align*}
ad_z(x)\succ\alpha(y)+\alpha(x)\succ ad_z(y)&=(z\succ x-x\prec z)\succ\alpha(y)+\alpha(x)\succ(z\succ y-y\prec z)\\
&=\alpha(z)\succ(x\succ y)-(x\prec z)\succ\alpha(y)+(x\succ z)\succ\alpha(y)-(x\succ y)\succ\alpha(z)\\
&=\alpha(z)\succ(x\succ y)-(x\prec z)\succ\alpha(y)+(x\prec z)\succ\alpha(y)-(x\succ y)\prec\alpha(z)\\
&=\alpha(z)\succ(x\succ y)-(x\succ y)\succ\alpha(z).
\end{align*}
Thus, $ad_z(\succ y)=ad_z(x)\succ\alpha(y)+\alpha(x)\succ ad_z(y).$ Therefore, $ad_z$ is an inner-derivation of $\mathscr{A}.$
\end{proof}

 \begin{definition}
 Let $(\mathscr{A},\ast,\alpha)$ be a Hom-anti-associative algebra. A \textbf{bimodule} $(M, L, R, \beta)$ over $(\mathscr{A}, \ast, \alpha)$ consists of:
\begin{itemize}
  \item an $\mathbb{F}$-vector space $M$,
  \item a linear map $\beta : M \to M$,
  \item a linear left action $l : A \to \mathrm{End}_F(M)
$.
  \item a linear right action $r: A \to \mathrm{End}_F(M)$.
\end{itemize}
such that, for all $a, b \in \mathscr{A}$ and $m \in M$, the following identities are satisfied:
\begin{align}
l(\alpha(a))(l(b)(m)) &=-l(a \ast b)(\beta(m))\\
r(\alpha(b))(r(a)(m))&=-r(a \ast b) (\beta(m))\\
l(\alpha(a))(r(b)(m))&=-r(\alpha(b))(l(a)(m))\\
\beta(l(a)(m))&=l(\alpha(a))(\beta(m))\\
\beta(r(a)(m))&=r(\alpha(a))(\beta(m)).
\end{align} 
 In particular, $(\mathscr{A}, l_*, r_*, \alpha)$ is a bimodule over $(\mathscr{A}, *, \alpha)$, where $l_*, r_* : \mathscr{A} \times \mathscr{A} \to \mathscr{A}$ are defined by
\[l_*(x) (y) = r_*(y)( x) = x * y \quad \text{for all } x, y \in \mathscr{A}.
\]
\end{definition}  
\begin{lem}
 Let $(V, l, r,\beta)$ be a bimodule over an Hom-anti-associative algebra $\mathscr{A}$. Then $(V^*, r^*, l^*,\beta^*)$ is also a bimodule over an Hom-anti-associative algebra $\mathscr{A}$, where for all $x \in \mathscr{A}$, $u^* \in V^*$, $v \in V$, the actions $l^*, r^*\ and\ \beta^*$ are defined by:
\[\beta^*(u^*)(v)=u^*(\beta(v)),\quad
\langle l^*(x)u^*, v \rangle = \langle u^*, l(x)v \rangle, \quad 
\langle r^*(x)u^*, v \rangle = \langle u^*, r(x)v \rangle
\]
  $ \forall x\in \mathscr{A},\ u^*\in V^*,\ v \in V. $
\end{lem}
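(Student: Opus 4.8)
The plan is to check, one by one, the five identities in the definition of a bimodule over $(\mathscr{A},\ast,\alpha)$ for the data $(V^*,r^*,l^*,\beta^*)$, that is, with $r^*$ as the left action, $l^*$ as the right action, and $\beta^*$ as the twisting map. The whole argument rests on the non-degeneracy of the evaluation pairing $\langle u^*,v\rangle=u^*(v)$ between $V^*$ and $V$: an identity between two elements of $V^*$ holds as soon as it holds after pairing with an arbitrary $v\in V$. So for each of the five required identities I would pair both sides with $v$ and then repeatedly use the three defining relations $\langle l^*(x)u^*,v\rangle=\langle u^*,l(x)v\rangle$, $\langle r^*(x)u^*,v\rangle=\langle u^*,r(x)v\rangle$ and $\langle\beta^*u^*,v\rangle=\langle u^*,\beta v\rangle$ to move $l^*,r^*,\beta^*$ onto the $V$-side. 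What remains is an operator identity among $l,r,\beta$ evaluated at $v$, and it coincides with one of the bimodule axioms already granted for $(V,l,r,\beta)$.

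Keeping track of which axiom one lands on is the crux, because transposition reverses the order of composition and interchanges the left and right slots. I expect the correspondence to be: the left-action axiom for $r^*$ dualizes to the right-action axiom $r(\alpha(b))(r(a)m)=-r(a\ast b)(\beta m)$ for $r$; the right-action axiom for $l^*$ dualizes to the left-action axiom $l(\alpha(a))(l(b)m)=-l(a\ast b)(\beta m)$ for $l$; the mixed compatibility $r^*(\alpha(a))(l^*(b)u^*)=-l^*(\alpha(b))(r^*(a)u^*)$ dualizes to the mixed axiom $l(\alpha(a))(r(b)m)=-r(\alpha(b))(l(a)m)$ after exchanging $a$ and $b$; and the two $\beta^*$-compatibilities $\beta^*\circ r^*(a)=r^*(\alpha(a))\circ\beta^*$ and $\beta^*\circ l^*(a)=l^*(\alpha(a))\circ\beta^*$ dualize to $\beta\circ r(a)=r(\alpha(a))\circ\beta$ and $\beta\circ l(a)=l(\alpha(a))\circ\beta$. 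In the first three cases one additionally uses these last two compatibilities to push $\beta$, respectively $\beta^*$, to the side of the equation demanded by the axioms.

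The five paired computations are routine and essentially identical in shape, so I would present them compactly. The one delicate point, and the only place an error is likely, is exactly the bookkeeping just described: substituting $a\leftrightarrow b$ in the right place and invoking $\beta\circ l(a)=l(\alpha(a))\circ\beta$ and $\beta\circ r(a)=r(\alpha(a))\circ\beta$ in the direction that leaves the twisting map where the bimodule axioms require it. Once the matching is set up, non-degeneracy of the pairing yields each identity, and assembling the five cases finishes the proof.
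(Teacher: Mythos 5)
Your plan coincides with the paper's own proof: it checks the five bimodule identities for $(V^*,r^*,l^*,\beta^*)$ by pairing with an arbitrary $v\in V$, moving $l^*,r^*,\beta^*$ across the pairing via the defining adjunctions, and then invoking the corresponding axiom of $(V,l,r,\beta)$, with exactly the axiom-matching you describe (left action $r^*$ against the $r$-axiom, right action $l^*$ against the $l$-axiom, the mixed identity against the mixed axiom with $a,b$ exchanged, and the two $\beta^*$-compatibilities against the $\beta$-compatibilities). The bookkeeping issue you flag as the delicate point is precisely where the paper is also most cavalier --- transposition reverses the order of composition, so what is actually needed on the $V$-side are the reversed forms such as $l(x)\beta=\beta\,l(\alpha(x))$ and $l(x\ast y)\beta=-\,l(\alpha(x))\,l(y)$, which the paper simply quotes as if they were the stated axioms --- so, modulo that shared caveat, your argument is the paper's argument.
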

\begin{proof}
 \begin{itemize}
  \item[1.] For all $u^*$ and $v$,
  \[
  \langle \beta^* l^*(x) u^*,\, v \rangle 
  = \langle l^*(x) u^*,\, \beta(v) \rangle 
  = \langle u^*,\, l(x)\, \beta(v) \rangle.
  \]
  But $l(x)\beta = \beta\, l(\alpha(x))$, so:
  \[
  = \langle u^*,\, \beta\, l(\alpha(x))\, v \rangle 
  = \langle \beta^* u^*,\, l(\alpha(x))\, v \rangle 
  = \langle l^*(\alpha(x))\, \beta^* u^*,\, v \rangle.
  \]
  Therefore, $\beta^*\, l^*(x) = l^*(\alpha(x))\, \beta^*$.

  \item[2.] The same argument for $r^*$:
		
	\begin{align*}
\langle \beta^* r^*(x) u^*,\, v \rangle &=\langle r^*(x) u^*,\, \beta(v) \rangle\\
&=\langle u^*,\, r(x)\, \beta(v) \rangle=\langle u^*,\, \beta\, r(\alpha(x))\, v \rangle\\
&=\langle \beta^* u^*,\, r(\alpha(x))\, v \rangle=\langle r^*(\alpha(x))\, \beta^* u^*,\, v \rangle.
\end{align*}

  \item[3.] For $l^*(\mu(x,y))\, \beta^*$:
  \[
  \langle l^*(\mu(x,y))\, \beta^* u^*,\, v \rangle 
  = \langle \beta^* u^*,\, l(\mu(x,y))\, v \rangle 
  = u^*(\beta(l(\mu(x,y))\, v)).
  \]
  But $l(\mu(x,y))\, \beta = -\, l(\alpha(x))\, l(y)$, so:
  \[
  = u^*(-\, l(\alpha(x))\, l(y)\, v)
  = -\, u^*(l(\alpha(x))\, l(y)\, v)
  = -\, \langle l^*(\alpha(x))\, u^*,\, l(y)\, v \rangle
  = -\, \langle l^*(\alpha(x))\, l^*(y)\, u^*,\, v \rangle.
  \]

  Therefore,
  \[
  l^*(\mu(x,y))\, \beta^* = -\, l^*(\alpha(x))\, l^*(y).
  \]

  \item[4.] Similarly, for $r^*$:
  \[
  \langle r^*(\mu(x,y))\, \beta^* u^*,\, v \rangle 
  = \langle \beta^* u^*,\, r(\mu(x,y))\, v \rangle 
  = u^*(\beta(r(\mu(x,y))\, v)).
  \]
  But $r(\mu(x,y))\, \beta = -\, r(\alpha(y))\, r(x)$, so:
  \[
  = -\, u^*(r(\alpha(y))\, r(x)\, v)
  = -\, \langle r^*(\alpha(y))\, u^*,\, r(x)\, v \rangle
  = -\, \langle r^*(\alpha(y))\, r^*(x)\, u^*,\, v \rangle.
  \]

  Therefore,
  \[
  r^*(\mu(x,y))\, \beta^* = -\, r^*(\alpha(y))\, r^*(x).
  \]

  \item[5.] Finally,
  \[
  \langle r^*(\alpha(y))\, l^*(x) u^*,\, v \rangle 
  = \langle l^*(x) u^*,\, r(\alpha(y))\, v \rangle 
  = \langle u^*,\, l(x)\, r(\alpha(y))\, v \rangle.
  \]
  But $r(\alpha(y))\, l(x) = -\, l(\alpha(x))\, r(y)$, so:
  \[
  = \langle u^*,\, -\, l(\alpha(x))\, r(y)\, v \rangle 
  = -\, \langle l^*(\alpha(x))\, u^*,\, r(y)\, v \rangle 
  = -\, \langle l^*(\alpha(x))\, r^*(y)\, u^*,\, v \rangle.
  \]

  Therefore,
  \[
  r^*(\alpha(y))\, l^*(x) = -\, l^*(\alpha(x))\, r^*(y).
  \]
\end{itemize}
	This ends the proof.
\end{proof}
\begin{thm}
 Let \( \mathscr{A} \) be a vector space with two bilinear operations 
\[
\succ: \ \mathscr{A} \times \mathscr{A} \to \mathscr{A}, \quad \prec:\ \mathscr{A} \times \mathscr{A} \to \mathscr{A},\] and a linear map $\alpha:\ \mathscr{A}\to \mathscr{A}$. Define a bilinear operation \( \ast \) as
\begin{equation}
    x \ast y = x \succ y + x \prec y, \quad \forall x, y \in \mathscr{A}.
\end{equation} 
Then $(\mathscr{A}, \succ, \prec, \alpha)$ is a Hom-rhizaform algebra if and only if $(\mathscr{A}, *, \alpha)$ is an Hom-anti-associative algebra and $(\mathscr{A}, L_\ast R_\ast, \alpha)$ is 
a bimodule over $(\mathscr{A}, *, \alpha)$, where the two maps $L_\succ, R_\prec : \mathscr{A} \times \mathscr{A} \to \mathscr{A}$ are defined by
\[
L_\ast(x )( y) = x \succ y, \quad R_\ast(x )( y) = y \prec x, \quad \forall x, y \in \mathscr{A}.
\]
\begin{proof}
  From Eqs.~\eqref{eq0}, \eqref{req1}, \eqref{req2}, and \eqref{eq8}, it follows that \((\mathscr{A}, \succ, \prec, \alpha)\) is a Hom-rhizaform algebra if and only if \((\mathscr{A}, \succ, \prec, \alpha)\) is an Hom-anti-associative admissible algebra, that is, \((\mathscr{A}, * , \alpha)\) is an Hom-anti-associative algebra, and for all \(x, y, z \in \mathscr{A}\), the following equations hold:
\[
\alpha(x) \succ (y \succ z) = - (x * y) \succ \alpha(z),
\]
\[
(x \prec y) \prec \alpha(z) = - \alpha(x) \prec (y * z),
\]
\[
(x \succ y) \prec \alpha(z) = - \alpha(x) \succ (y \prec z).
\]
Hence, \((\mathscr{A}, L_{\ast}, R_{\ast},\alpha)\) is a bimodule.
\end{proof}
\end{thm}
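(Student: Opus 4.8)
The plan is to prove this by directly unwinding the two definitions, the work being almost entirely bookkeeping together with one short computation. First I would spell out what it means for $(\mathscr{A}, L_\ast, R_\ast, \alpha)$ to be a bimodule over $(\mathscr{A}, \ast, \alpha)$: substitute $M = \mathscr{A}$, $l = L_\ast$, $r = R_\ast$, $\beta = \alpha$ into the five bimodule axioms and expand using $L_\ast(x)(y) = x \succ y$ and $R_\ast(x)(y) = y \prec x$. This yields, for all $x,y,z \in \mathscr{A}$: (B1) $\alpha(x) \succ (y \succ z) = -(x \ast y) \succ \alpha(z)$; (B2) $(x \prec y) \prec \alpha(z) = -\alpha(x) \prec (y \ast z)$; (B3) $\alpha(x) \succ (y \prec z) = -(x \succ y) \prec \alpha(z)$; (B4) $\alpha(x \succ y) = \alpha(x) \succ \alpha(y)$; (B5) $\alpha(x \prec y) = \alpha(x) \prec \alpha(y)$. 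I would then observe, after relabeling variables and moving a sign across, that (B1), (B2), (B3) are precisely \eqref{req1}, \eqref{req2}, \eqref{eq8}, while (B4) and (B5) are exactly the multiplicativity conditions $\alpha\circ\succ = \succ\circ\alpha^{\otimes}$ and $\alpha\circ\prec = \prec\circ\alpha^{\otimes}$ of Definition \ref{def0}. In short, the assertion ``$(\mathscr{A}, L_\ast, R_\ast, \alpha)$ is a bimodule over $(\mathscr{A}, \ast, \alpha)$'' is literally the conjunction of the four defining identities of a Hom-rhizaform algebra.

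Granting this dictionary, both implications are immediate once one handles the Hom-anti-associativity of $\ast$ separately. For the ``if'' direction it is part of the hypothesis, and the bimodule hypothesis gives (B1)--(B5), hence all of Definition \ref{def0}, so $(\mathscr{A}, \succ, \prec, \alpha)$ is a Hom-rhizaform algebra. For the ``only if'' direction I would first establish that $\ast$ is Hom-anti-associative: bilinearly expand both $\alpha(x)\ast(y\ast z)$ and $(x\ast y)\ast\alpha(z)$ into their four summands each, then rewrite $(x\ast y)\succ\alpha(z)$ by \eqref{req1}, $(x\succ y)\prec\alpha(z)$ by \eqref{eq8}, and $(x\prec y)\prec\alpha(z)$ by \eqref{req2}; the two expansions then coincide up to overall sign, which is exactly \eqref{eq4}, i.e.\ $\alpha(x)\ast(y\ast z) = -(x\ast y)\ast\alpha(z)$. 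Since a Hom-rhizaform algebra also satisfies \eqref{req1}, \eqref{req2}, \eqref{eq8} and the two multiplicativity conditions, statements (B1)--(B5) hold, so $(\mathscr{A}, L_\ast, R_\ast, \alpha)$ is a bimodule.

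I do not anticipate any genuine obstacle --- the statement is structural. The two places requiring care are the sign conventions, since several bimodule axioms, as well as \eqref{req1}, \eqref{req2}, \eqref{eq8}, carry a minus sign and these must be matched consistently when going back and forth; and the argument-order convention built into $R_\ast$, namely $R_\ast(x)(y) = y \prec x$, which places the module element on the \emph{left} of $\prec$. Thus, for instance, the bimodule axiom $r(\alpha(b))(r(a)(m)) = -r(a\ast b)(\beta(m))$ unwinds to $(m\prec a)\prec\alpha(b) = -\alpha(m)\prec(a\ast b)$, which is \eqref{req2} and not an identity with the two outer arguments transposed. Once this dictionary is laid out carefully, the equivalence follows by inspection, and the only computation of substance is the three-term reduction establishing the anti-associativity of $\ast$ in the forward direction.
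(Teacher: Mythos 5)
Your proposal is correct and follows essentially the same route as the paper: unwind the bimodule axioms with $l=L_\ast$, $r=R_\ast$, $\beta=\alpha$ and match them, identity by identity (including the sign and the argument-order convention in $R_\ast$), against the defining relations \eqref{req1}, \eqref{req2}, \eqref{eq8} and the multiplicativity of $\alpha$. In fact you are slightly more complete than the paper, which leaves implicit both the three-term reduction giving the Hom-anti-associativity of $\ast$ and the identification of the $\beta$-compatibility axioms with $\alpha\circ\succ=\succ\circ\alpha^{\otimes 2}$, $\alpha\circ\prec=\prec\circ\alpha^{\otimes 2}$.
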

\section{Hom-Rhizaform algebras and O-operators}
\begin{definition}
Let \((\mathscr{A}, \ast, \alpha)\) be an Hom-anti-associative algebra, and \((V,L,R, \beta)\) a bimodule over $(\mathscr{A}, \ast, \alpha)$ .
A linear map \(T: V \to \mathscr{A}\) is called an \emph{O-operator} of \((\mathscr{A}, \ast, \alpha)\) associated to \((V,L,R, \beta)\) if
\[
T \circ \beta = \alpha \circ T,\quad
and\quad 
T(u) * T(v) = T\left( L(T(u))(v) + R(T(v))(u) \right), \quad \forall u, v \in V.
\]
In particular, an O-operator $T$ of $(A, *, \alpha)$ associated to the bimodule $(A, L_*, R_*, \alpha)$ is called a Rota-Baxter operator of weight 0, 
that is, $T : A \to A$ is a linear map satisfying
\[
T(x) * T(y) = T\big(T(x) * y + x * T(y)\big), \quad \forall x, y \in A,
\]
and
\[
T \circ \alpha = \alpha \circ T.
\]
In these case, we call $(A, T, \alpha)$ a Rota-Baxter Hom-algebra of weight 0.

\end{definition}
\begin{thm}\label{th0}
Let $(\mathscr{A}, \cdot, \alpha)$ be a Hom-anti-associative algebra and $(V, L, R, \beta)$ be a bimodule. Suppose that 
$T : V \rightarrow \mathscr{A}$ is an $\mathcal{O}$-operator of $(\mathscr{A}, \cdot, \alpha)$ associated to $(V, L, R, \beta)$. Define two bilinear operations $\succ, \prec$ on $V$ respectively as
\begin{align}
u \succ v = L(T(u))( v), \quad u \prec v = R(T(v))(u), \quad \forall u, v \in V.\label{eq00}
\end{align}
Then $(V, \succ, \prec, \beta)$ is a Hom-rhizaform algebra. In this case, $T$ is a homomorphism of Hom-anti-associative algebras from the associated Hom-anti-associative algebra $(V, \ast, \beta)$ to $(\mathscr{A}, \cdot, \alpha)$. Furthermore, there is an induced Hom-rhizaform algebra structure on 
\[
T(V) = \{ T(u) \mid u \in V \} \subseteq \mathscr{A}
\]
given by
\begin{align}
    T(u) \succ T(v) = T(u \succ v), \quad T(u) \prec T(v) = T(u \prec v), \quad \forall u, v \in V,\label{eq01}
\end{align}
and $T$ is a homomorphism of Hom-rhizaform algebras.
\end{thm}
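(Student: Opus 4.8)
The plan is to verify the four defining identities of a Hom-rhizaform algebra (the multiplicativity conditions together with \eqref{req1}, \eqref{req2}, \eqref{eq8}) for $(V,\succ,\prec,\beta)$ by translating each one, via the definitions \eqref{eq00}, into an identity about $L$, $R$, $T$ that follows either from the bimodule axioms or from the $\mathcal{O}$-operator condition $T(u)\cdot T(v)=T(L(T(u))(v)+R(T(v))(u))$. First I would record the key consequence of being an $\mathcal{O}$-operator: writing $u\ast v=u\succ v+u\prec v=L(T(u))(v)+R(T(v))(u)$, the $\mathcal{O}$-operator identity says exactly $T(u)\cdot T(v)=T(u\ast v)$, i.e. $T$ intertwines $\ast$ on $V$ with $\cdot$ on $\mathscr{A}$; combined with $T\circ\beta=\alpha\circ T$ this already gives that $T$ is a Hom-algebra homomorphism once we know $(V,\ast,\beta)$ is Hom-anti-associative, which will drop out of the three rhizaform identities by summing them.

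Next I would check multiplicativity: $\beta(u\succ v)=\beta(L(T(u))v)=L(\alpha(T(u)))(\beta(v))=L(T(\beta(u)))(\beta(v))=\beta(u)\succ\beta(v)$, using the bimodule axiom $\beta\circ L(a)=L(\alpha(a))\circ\beta$ and $\alpha\circ T=T\circ\beta$; the computation for $\prec$ is symmetric. Then for \eqref{req1} I would compute the left side $(u\succ v+u\prec v)\succ\beta(w)=L(T(u\ast v))(\beta(w))=L(T(u)\cdot T(v))(\beta(w))$ (using $T(u\ast v)=T(u)\cdot T(v)$), which by the left bimodule axiom $L(a\cdot b)(\beta(m))=-L(\alpha(a))(L(b)(m))$ equals $-L(\alpha(T(u)))(L(T(v))(w))=-L(T(\beta(u)))(v\succ w)=-\beta(u)\succ(v\succ w)$, which is the right side. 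Identity \eqref{req2} follows the same way from the right bimodule axiom $R(a\cdot b)(\beta(m))=-R(\alpha(b))(R(a)(m))$, carefully tracking that $\prec$ puts the $T$-image on the right: $\beta(u)\prec(v\succ w+v\prec w)=R(T(v\ast w))(\beta(u))=R(T(v)\cdot T(w))(\beta(u))=-R(\alpha(T(w)))(R(T(v))(u))=-(u\prec v)\prec\beta(w)$. Finally \eqref{eq8} comes from the mixed axiom $L(\alpha(a))\circ R(b)=-R(\alpha(b))\circ L(a)$: $\beta(u)\succ(v\prec w)=L(T(\beta(u)))(R(T(w))(v))=L(\alpha(T(u)))(R(T(w))(v))=-R(\alpha(T(w)))(L(T(u))(v))=-(u\succ v)\prec\beta(w)$.

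Having established that $(V,\succ,\prec,\beta)$ is Hom-rhizaform, the associated Hom-anti-associative algebra $(V,\ast,\beta)$ exists by part (i) of the earlier theorem, and the identity $T(u\ast v)=T(u)\cdot T(v)$ together with $T\circ\beta=\alpha\circ T$ shows $T$ is a homomorphism $(V,\ast,\beta)\to(\mathscr{A},\cdot,\alpha)$. For the induced structure on $T(V)$, I would first check the operations \eqref{eq01} are well defined: if $T(u)=T(u')$ and $T(v)=T(v')$ then $T(u\succ v)=T(L(T(u))v)$ depends only on $T(u)$ and... here is the subtlety, it also depends on $v$ not just $T(v)$. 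So well-definedness is the one genuinely nontrivial point: I expect to argue that $T(u\succ v)=T(L(T(u))v)$ and that by the $\mathcal{O}$-operator relation $T(u)\cdot T(v)=T(u\succ v)+T(u\prec v)$ while the pre-Jacobi-Jordan-type splitting on $T(V)$ is forced; more directly, one shows $L(T(u))v$ and $R(T(v))u$ have images under $T$ determined by $T(u),T(v)$ because $T(L(T(u))v)=T(u)\succ T(v)$ is a definition we must show is consistent — the cleanest route is to observe that on $T(V)$ the transported operations satisfy $T(u)\succ T(v):=T(L(T(u))v)$ and verify this is independent of the choice of preimage $v$ using that $L(T(u))\colon V\to\mathscr{A}$ factors appropriately, or simply restrict attention to the case $T$ injective / note that $T(V)$ inherits the structure as the image under the homomorphism $T$ of the Hom-rhizaform algebra $(V,\succ,\prec,\beta)$, so the quotient $V/\ker T$ carries the structure and $T$ is a homomorphism. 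The main obstacle is precisely this well-definedness of \eqref{eq01}; once it is handled, that $T\colon(V,\succ,\prec,\beta)\to(T(V),\succ,\prec,\alpha|_{T(V)})$ is a Hom-rhizaform homomorphism is immediate from \eqref{eq01} and $\alpha\circ T=T\circ\beta$, and $\alpha$ restricts to $T(V)$ because $\alpha(T(u))=T(\beta(u))\in T(V)$.
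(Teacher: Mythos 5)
Your verification of the core statements matches the paper's proof: you translate the three rhizaform identities \eqref{req1}, \eqref{req2}, \eqref{eq8} on $V$ into statements about $L$, $R$, $T$ and close each one with the corresponding bimodule axiom together with $T\circ\beta=\alpha\circ T$, and you obtain the homomorphism property from the observation that the $\mathcal{O}$-operator identity is precisely $T(u\ast v)=T(u)\cdot T(v)$. These computations are correct and are essentially the ones in the paper; likewise, the rhizaform identities on $T(V)$ follow, as you say, by applying $T$ to the identities on $V$ and using \eqref{eq01} and $\alpha\circ T=T\circ\beta$, which is exactly how the paper argues.

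The one place where you diverge is the well-definedness of the operations \eqref{eq01} on $T(V)$, which you rightly flag (the paper passes over it in silence) but do not actually settle: restricting to injective $T$ changes the hypothesis, and the quotient argument via $V/\ker T$ is circular, since $\ker T$ being stable under the operations in the relevant sense is exactly the point at issue. The gap closes in one line using the $\mathcal{O}$-operator identity itself. If $T(w)=0$, then
\begin{align*}
T(u\succ w)\;=\;T\bigl(L(T(u))(w)\bigr)\;=\;T\bigl(L(T(u))(w)+R(T(w))(u)\bigr)\;=\;T(u)\cdot T(w)\;=\;0,
\end{align*}
since $R(T(w))=R(0)=0$; symmetrically, if $T(z)=0$ then $T(z\prec v)=T\bigl(R(T(v))(z)\bigr)=T(z)\cdot T(v)=0$ because $L(T(z))=0$. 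Since $u\succ v$ depends on $u$ only through $T(u)$ and $u\prec v$ depends on $v$ only through $T(v)$, these two observations show that $T(u\succ v)$ and $T(u\prec v)$ depend only on $T(u)$ and $T(v)$, so \eqref{eq01} is a legitimate definition on $T(V)$. With that inserted, your argument is complete and, apart from making explicit a point the paper leaves implicit, it follows the same route as the paper's proof.
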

\begin{proof}
 \textbf{Left-hand side:}
\begin{align*}
(u \succ v + u \prec v) \succ \beta(w)
&= L( T(L(T(u))(v)))(\beta(w))+L( T(R(T(V))(U)))(\beta(w))\\
&= L\left( T(u) \cdot T(v) \right)(\beta(w))
\end{align*}
\textbf{Right-hand side:}
\begin{align*}
-\beta(u) \succ (v \succ w)
&= -L(T(\beta(u)))(L(T(v))(w))\\
&= -L(\alpha(T(u)))(L(T(v))(w))\\
\quad \text{(since } T \circ \beta = \alpha \circ T \text{)}
\end{align*}

Since \( (V, L, R, \beta) \) is a bimodule of the Hom-anti-associative algebra \( (\mathscr{A}, \cdot, \alpha) \), we have:
\[L\left( T(u) \cdot T(v) \right)(\beta(w))=-L(\alpha(T(u)))(L(T(v))(w)).
\]

Then
\[
(u \succ v + u \prec v) \succ \beta(w) = -\beta(u) \succ (v \succ w)
\]
Similarly, we have
\begin{align*}
(u \prec v) \prec \beta(w) &= -\beta(u) \prec (v \succ w + v \prec w), \\
(u \succ v) \prec \beta(w) &= -\beta(u) \succ (v \prec w).
\end{align*}
Thus, by Definition~\ref{eq0}, \((V, \succ, \prec, \beta)\) is a Hom-rhizaform algebra.
\[
\begin{aligned}
T(u \ast v) &= T(u \succ v + u \prec v) = T(u \succ v) + T(u \prec v) \\
&= T(L(T(u))(v))+ T(L(T(v))(u)) \\
&= T(u) \cdot T(v), \quad \forall u, v \in V,
\end{aligned}
\]
and the relation $
T \circ \beta = \alpha \circ T$
shows that \(T\) is a homomorphism of Hom anti-associative algebras from the associated Hom anti-associative algebra \((V, \ast, \beta)\) to \((\mathscr{A}, \cdot, \alpha)\).
Furthermore, for any $u,\ v,\ w \in V$, we can obtain
\begin{align*}
\alpha(T(u)) \succ (T(v) \succ T(w)) 
&= T(\beta(u)) \succ (T(v) \succ T(w)) \\
&= T(\beta(u) \succ (v \succ w)) \\
&= -T(u \succ v) \succ T(\beta(w)) - T(u \prec v) \succ T(\beta(w)) \\
&= -(T(u) \succ T(v)) \succ T(\beta(w)) - (T(u) \prec T(v)) \succ T(\beta(w)) \\
&= -\big( T(u) \succ T(v) + T(u) \prec T(v) \big) \succ T(\beta(w))\\ 
&= -\big( T(u) \succ T(v) + T(u) \prec T(v) \big) \succ \alpha(T(w)).
\end{align*}
Similarly, we can derive
\[
(T(u) \prec T(v)) \prec \alpha(T(w)) = -\alpha(T(u)) \prec (T(v) \succ T(w) + T(v) \prec T(w)),
\]
\[
(T(u) \succ T(v)) \prec \alpha(T(w)) = -\alpha(T(u)) \succ (T(v) \prec T(w)),
\]
which imply that \(T(V)\) is a Hom-rhizaform algebra. This completes the proof.
\end{proof}
\begin{cor}
 Let \((\mathscr{A}, *, \alpha)\) be a Hom-anti-associative algebra and \(R\) be a Rota-Baxter operator of weight 0. Then the triple \((\mathscr{A}, \succ, \prec, \alpha)\) 
is a Hom-rhizaform algebra, where
\[
x \succ y = R(x) * y, \quad x \prec y = x * R(y), \quad \forall x,y \in \mathscr{A}.
\]  
\end{cor}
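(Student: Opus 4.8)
The plan is to recognize this corollary as the specialization of Theorem~\ref{th0} to the regular bimodule, so that essentially no new computation is needed. First I would recall that, by the last clause of the definition of a bimodule over a Hom-anti-associative algebra, the triple $(\mathscr{A}, L_*, R_*, \alpha)$ with $L_*(x)(y) = R_*(y)(x) = x * y$ is a bimodule over $(\mathscr{A}, *, \alpha)$. Next I would observe that a Rota-Baxter operator $R$ of weight $0$ on $(\mathscr{A}, *, \alpha)$ is precisely an $\mathcal{O}$-operator of $(\mathscr{A}, *, \alpha)$ associated to this regular bimodule: the defining identity $R(x) * R(y) = R\big(R(x) * y + x * R(y)\big)$ is exactly $T(u) * T(v) = T\big(L(T(u))(v) + R(T(v))(u)\big)$ with $T = R$, $L = L_*$, $R = R_*$, $V = \mathscr{A}$, and the compatibility $R \circ \alpha = \alpha \circ R$ is $T \circ \beta = \alpha \circ T$ with $\beta = \alpha$.

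Then I would apply Theorem~\ref{th0} verbatim with $V = \mathscr{A}$, $\beta = \alpha$, $L = L_*$, $R = R_*$ and $T = R$. The operations induced by \eqref{eq00} are
\[
u \succ v = L_*(R(u))(v) = R(u) * v, \qquad u \prec v = R_*(R(v))(u) = u * R(v),
\]
which are exactly the operations $\succ$ and $\prec$ appearing in the statement. Theorem~\ref{th0} then immediately yields that $(\mathscr{A}, \succ, \prec, \alpha)$ is a Hom-rhizaform algebra.

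The one point worth spelling out is the multiplicativity condition $\alpha \circ \succ = \succ \circ \alpha^{\otimes}$ and $\alpha \circ \prec = \prec \circ \alpha^{\otimes}$ from Definition~\ref{def0}; these follow from $\alpha$ being an algebra morphism for $*$ together with $R \circ \alpha = \alpha \circ R$, since
\[
\alpha(x \succ y) = \alpha\big(R(x) * y\big) = \alpha(R(x)) * \alpha(y) = R(\alpha(x)) * \alpha(y) = \alpha(x) \succ \alpha(y),
\]
and symmetrically for $\prec$. Since there is no genuine obstacle here, the ``hard part'' is purely bookkeeping: making the identification between the Rota-Baxter data on $\mathscr{A}$ and the $\mathcal{O}$-operator data on the regular bimodule precise enough that Theorem~\ref{th0} applies without any modification.
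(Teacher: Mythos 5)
Your proposal is correct and follows exactly the route the paper intends: the corollary is stated as an immediate consequence of Theorem~\ref{th0}, obtained by viewing the Rota-Baxter operator $R$ as an $\mathcal{O}$-operator associated to the regular bimodule $(\mathscr{A}, L_*, R_*, \alpha)$, which is precisely your specialization $V=\mathscr{A}$, $\beta=\alpha$, $T=R$. Your extra remark verifying the multiplicativity conditions $\alpha\circ\succ=\succ\circ\alpha^{\otimes}$ and $\alpha\circ\prec=\prec\circ\alpha^{\otimes}$ is a welcome addition, since it makes explicit the assumption (implicit in the paper's claim that $(\mathscr{A},L_*,R_*,\alpha)$ is a bimodule) that $\alpha$ is multiplicative with respect to $*$.
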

\begin{thm}\label{th000}
 Let \( (\mathscr{A}, \ast, \alpha) \) be a Hom-anti-associative algebra. Then there exists a compatible Hom-rhizaform algebra structure  
on \( (\mathscr{A}, \ast, \alpha) \) if and only if there exists an invertible \( \mathcal{O} \)-operator of \( (\mathscr{A}, \ast, \alpha) \).
   \end{thm}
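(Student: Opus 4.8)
The plan is to build on Theorem~\ref{th0} and on the bimodule characterization of compatible Hom-rhizaform structures proved just before Section~3, so that neither direction requires any genuinely new computation. For the ``if'' direction, suppose $T\colon V\to\mathscr{A}$ is an invertible $\mathcal{O}$-operator of $(\mathscr{A},\ast,\alpha)$ associated to a bimodule $(V,L,R,\beta)$. By Theorem~\ref{th0}, the operations $u\succ v=L(T(u))(v)$ and $u\prec v=R(T(v))(u)$ make $(V,\succ,\prec,\beta)$ a Hom-rhizaform algebra, and moreover $T$ is a homomorphism from the associated Hom-anti-associative algebra $(V,\ast_V,\beta)$ to $(\mathscr{A},\ast,\alpha)$. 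Since $T$ is a linear isomorphism with $\alpha\circ T=T\circ\beta$, I would transport the structure along $T$ by setting
\[
x\succ' y:=T\bigl(T^{-1}(x)\succ T^{-1}(y)\bigr),\qquad
x\prec' y:=T\bigl(T^{-1}(x)\prec T^{-1}(y)\bigr),\qquad\forall\,x,y\in\mathscr{A}.
\]
A routine check — conjugating each of the defining identities \eqref{req1}, \eqref{req2}, \eqref{eq8} and the multiplicativity of $\beta$ by $T$, using $\alpha=T\circ\beta\circ T^{-1}$ — shows that $(\mathscr{A},\succ',\prec',\alpha)$ is a Hom-rhizaform algebra isomorphic to $(V,\succ,\prec,\beta)$.

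The key point is compatibility, which I would verify by
\[
x\succ' y+x\prec' y
=T\bigl(T^{-1}(x)\succ T^{-1}(y)+T^{-1}(x)\prec T^{-1}(y)\bigr)
=T\bigl(T^{-1}(x)\ast_V T^{-1}(y)\bigr)
=x\ast y,
\]
where the last equality is precisely the statement that $T$ is a homomorphism of the associated Hom-anti-associative algebras. Hence $(\mathscr{A},\succ',\prec',\alpha)$ is a compatible Hom-rhizaform algebra structure on $(\mathscr{A},\ast,\alpha)$.

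For the ``only if'' direction, suppose $(\mathscr{A},\succ,\prec,\alpha)$ is a compatible Hom-rhizaform structure on $(\mathscr{A},\ast,\alpha)$. By the theorem characterizing compatible structures, $(\mathscr{A},L_\ast,R_\ast,\alpha)$ is a bimodule over $(\mathscr{A},\ast,\alpha)$, with $L_\ast(x)(y)=x\succ y$ and $R_\ast(x)(y)=y\prec x$. Then I would check directly that the identity map $\mathrm{id}_{\mathscr{A}}$ is an $\mathcal{O}$-operator associated to this bimodule: indeed $\mathrm{id}\circ\alpha=\alpha\circ\mathrm{id}$, and for all $x,y\in\mathscr{A}$,
\[
\mathrm{id}(x)\ast\mathrm{id}(y)=x\ast y=x\succ y+x\prec y
=L_\ast(\mathrm{id}(x))(y)+R_\ast(\mathrm{id}(y))(x)
=\mathrm{id}\bigl(L_\ast(\mathrm{id}(x))(y)+R_\ast(\mathrm{id}(y))(x)\bigr).
\]
Since $\mathrm{id}_{\mathscr{A}}$ is invertible, this produces the required invertible $\mathcal{O}$-operator.

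I expect the only real obstacle to be bookkeeping in the ``if'' direction: confirming that conjugation by the isomorphism $T$ (which intertwines $\beta$ and $\alpha$) carries each Hom-rhizaform axiom on $V$ to the corresponding axiom on $\mathscr{A}$, and that the associated anti-associative product of the transported structure is exactly $\ast$. The converse is essentially immediate once the identity operator is exhibited, and everything else is supplied by Theorem~\ref{th0} and the bimodule theorem already established.
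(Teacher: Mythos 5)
Your proposal is correct and follows essentially the same route as the paper: the ``only if'' direction exhibits the identity map as an invertible $\mathcal{O}$-operator for the bimodule $(\mathscr{A},L_\ast,R_\ast,\alpha)$, and the ``if'' direction invokes Theorem~\ref{th0} and transports the Hom-rhizaform structure through the invertible $T$ (your conjugated operations $\succ',\prec'$ coincide with the induced structure on $T(V)=\mathscr{A}$ already provided by the ``furthermore'' part of that theorem), with the same computation $x\ast y=T(u)\ast T(v)=T(u\succ v+u\prec v)=x\succ y+x\prec y$ giving compatibility.
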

   \begin{proof}
   Suppose that $(\mathscr{A},\succ,\prec,\alpha)$ is a compatible Hom-rhizaform algebra structure on $(\mathscr{A},\ast,\alpha)$. Then 
\[
x \ast y = x \succ y + x \prec y = L_{\ast}(x)y + R_{\ast}(y)x, \quad \forall x,y \in \mathscr{A}.
\]
Hence the identity map $\mathrm{Id} : \mathscr{A} \to \mathscr{A}$ is an invertible $\mathcal{O}$-operator of $(\mathscr{A},\ast,\alpha)$ associated to the bimodule $(\mathscr{A}, L_{\ast}, R_{\ast}, \alpha)$. 
Conversely, suppose that $T : V \to \mathscr{A}$ is an invertible $\mathcal{O}$-operator of $(\mathscr{A},\ast,\alpha)$ associated to $(V, l, r, \beta)$. Then by theorem \ref{th0}, there exists a compatible rhizaform algebra structure on $V$ and $T(V)$ defined by equation \ref{eq00} and equation \ref{eq01} respectively. 
Let $x, y \in \mathscr{A}$. Then there exist $u, v \in V$ such that $x = T(u)$, $y = T(v)$. Hence we have
\begin{align*}
x \ast y = T(u) \ast T(v)
& = T\big( l(T(u))v + r(T(v))u \big)\\
&= T(u \succ v+ u \prec v)\\
&=T(u) \succ T(v) + T(u) \prec T(v) \\
&=x \succ y + x \prec y.
\end{align*}
So $(\mathscr{A},\succ,\prec,\alpha)$ is a compatible Hom-rhizaform algebra structure on $(\mathscr{A},\ast,\alpha)$.
 \end{proof}
\section{Hom-Rhizaform algebras and Connes cocycles}
 In this section, we show that Hom-rhizaform algebras can be obtained from nondegenerate Connes cocycles of Hom-anti
associative algebras.
\begin{definition}
 Let \( (\mathscr{A}, \ast, \alpha) \) be a \emph{Hom-anti-associative algebra}. 
A bilinear form \( B : \mathscr{A} \times \mathscr{A} \to \mathbb{K} \) is called a \emph{Connes cocycle} if it satisfies the condition:

\begin{align*}
B(a \ast b, \alpha(c)) + B(b \ast c, \alpha(a)) + B(c \ast a, \alpha(b)) = 0\\ 
\alpha \circ B = B \circ \alpha^{\otimes 2},\quad \forall \quad a, b, c \in \mathscr{A}.
\end{align*}

\end{definition}
\begin{thm}
Let \( (\mathscr{A}, \ast,\alpha) \) be an Hom-anti-associative algebra and let \( B \) be a nondegenerate Connes cocycle on \( (\mathscr{A}, \ast,\alpha) \). Then there exists a compatible Hom-rhizaform algebra structure \( (\mathscr{A}, \succ, \prec,\alpha) \) on \( (\mathscr{A}, \ast,\alpha) \) defined by:
\begin{align}
B(x \succ y, z) &= B(y, z \ast x), \\ 
B(x \prec y, z) &= B(x, y \ast z), \label{eq000}
\end{align}
$\forall x,\ y,\ z \in \mathscr{A}.$
\end{thm}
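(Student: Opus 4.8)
The natural strategy is to reduce this to Theorem~\ref{th0} by exhibiting an invertible $\mathcal{O}$-operator built from the nondegenerate cocycle $B$. Since $B$ is nondegenerate, it induces a linear isomorphism $B^{\flat} : \mathscr{A} \to \mathscr{A}^*$ by $B^{\flat}(x)(y) = B(x,y)$; the compatibility $\alpha \circ B = B \circ \alpha^{\otimes 2}$ translates into $B^{\flat} \circ \alpha = \alpha^* \circ B^{\flat}$, so $T := (B^{\flat})^{-1} : \mathscr{A}^* \to \mathscr{A}$ intertwines $\alpha^*$ and $\alpha$. First I would check that $(\mathscr{A}^*, r_*^*, l_*^*, \alpha^*)$ is a bimodule over $(\mathscr{A},\ast,\alpha)$: this is exactly the dual-bimodule lemma proved earlier in the excerpt (applied to the regular bimodule $(\mathscr{A}, l_*, r_*, \alpha)$). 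Then the core computation is to show that $T$ is an $\mathcal{O}$-operator of $(\mathscr{A},\ast,\alpha)$ associated to this dual bimodule, i.e.\ $T(\xi)\ast T(\eta) = T\big(l_*^*(T(\xi))\eta + r_*^*(T(\eta))\xi\big)$ for all $\xi,\eta\in\mathscr{A}^*$. Writing $x = T(\xi)$, $y = T(\eta)$ (equivalently $\xi = B^{\flat}(x)$, $\eta = B^{\flat}(y)$), unravelling the definitions of $l_*^*, r_*^*$ and pairing against an arbitrary $z\in\mathscr{A}$, this identity becomes precisely
\[
B(x\ast y,\, z) = B(x,\, z\ast y) + B(y,\, x\ast z),
\]
which is just a rewriting of the Connes cocycle condition $B(x\ast y,\alpha(c)) + B(y\ast c,\alpha(x)) + B(c\ast x,\alpha(y)) = 0$ after using $\alpha$-equivariance of $B$ and the Hom-anti-associative identity to remove the $\alpha$'s (the anti-associativity signs are what make the cocycle identity collapse to the $\mathcal{O}$-operator identity rather than merely to a cyclic relation). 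I expect this sign-bookkeeping step — confirming that the three cyclic terms of the cocycle, together with the minus signs coming from Hom-anti-associativity and from the dual actions $r_*^*, l_*^*$, reassemble exactly into the $\mathcal{O}$-operator relation — to be the main obstacle; everything else is formal.

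Once $T$ is known to be an invertible $\mathcal{O}$-operator, Theorem~\ref{th0} produces a Hom-rhizaform structure on $\mathscr{A}^*$ and hence, transporting along $T = (B^{\flat})^{-1}$, a compatible Hom-rhizaform structure on $\mathscr{A}$ whose underlying Hom-anti-associative algebra is $(\mathscr{A},\ast,\alpha)$. It remains to identify the transported operations with the formulas in the statement. By \eqref{eq00}, on $\mathscr{A}^*$ one has $\xi \succ \eta = l_*^*(T(\xi))\eta$ and $\xi \prec \eta = r_*^*(T(\eta))\xi$; transporting via $T$ gives $x \succ y = T(B^{\flat}(x) \succ B^{\flat}(y))$ and similarly for $\prec$. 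Pairing $x\succ y$ against $z$ and chasing through the definition of $l_*^*$ yields $B(x\succ y, z) = B^{\flat}(y)(l_*(x)z)\cdot(\pm 1) = B(y, x\ast z)$, and after matching against the Hom-anti-associative sign and the cocycle symmetry this is exactly $B(x\succ y,z) = B(y, z\ast x)$; the analogous computation for $\prec$ gives $B(x\prec y,z) = B(x, y\ast z)$. Thus the transported structure coincides with the one defined by \eqref{eq000}, and since \eqref{eq000} determines $\succ$ and $\prec$ uniquely (again by nondegeneracy of $B$), this completes the proof. A final sanity check worth recording is that the two displayed formulas are consistent with $x\ast y = x\succ y + x\prec y$: adding them gives $B(x\ast y, z) = B(y,z\ast x) + B(x, y\ast z)$, which must agree with the cocycle identity rewritten as above — so this is the same identity that drove the $\mathcal{O}$-operator verification, and no independent input is needed.
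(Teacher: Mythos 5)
Your overall strategy is exactly the paper's: use nondegeneracy of $B$ to build the isomorphism $B^{\flat}:\mathscr{A}\to\mathscr{A}^{*}$, show that its inverse is an invertible $\mathcal{O}$-operator for the dual of the regular bimodule, invoke Theorem~\ref{th0} (via Theorem~\ref{th000}), and transport the induced operations back to $\mathscr{A}$. The gap is in the bookkeeping of the dual actions. By the dual-bimodule lemma you appeal to, the dual of $(\mathscr{A},L_{\ast},R_{\ast},\alpha)$ is $(\mathscr{A}^{*},R_{\ast}^{*},L_{\ast}^{*},\alpha^{*})$: the left and right actions are \emph{exchanged}. You then write the $\mathcal{O}$-operator identity with $l_{\ast}^{*}$ as the left action and $r_{\ast}^{*}$ as the right action, i.e.\ without the exchange. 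This changes both the identity you set out to verify (you get $B(x\ast y,z)=B(x,z\ast y)+B(y,x\ast z)$, whereas the correct assignment leads to $B(x\ast y,z)=B(y,z\ast x)+B(x,y\ast z)$, which is what the paper uses) and, more seriously, the transported operations: with your assignment, chasing \eqref{eq00} gives $B(x\succ y,z)=B(y,x\ast z)$ and $B(x\prec y,z)=B(x,z\ast y)$, which are \emph{not} the formulas \eqref{eq000}. Your proposed repair ``after matching against the Hom-anti-associative sign and the cocycle symmetry'' does not work: the Connes cocycle condition is a cyclic identity and provides no way to interchange the two factors inside $\ast$, and no symmetry of $B$ is available that would turn $B(y,x\ast z)$ into $B(y,z\ast x)$. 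With the exchanged actions, as in the paper, the computation lands on \eqref{eq000} directly, with no extra manipulation.

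Separately, you explicitly defer the one genuinely nontrivial verification — that the cocycle condition $B(a\ast b,\alpha(c))+B(b\ast c,\alpha(a))+B(c\ast a,\alpha(b))=0$, together with $\alpha$-equivariance, yields the identity $B(x\ast y,z)=B(y,z\ast x)+B(x,y\ast z)$ needed for the $\mathcal{O}$-operator property — calling it the main obstacle but not carrying it out. Since the whole proof reduces to this identity plus the intertwining $T^{-1}\circ\alpha^{*}=\alpha\circ T^{-1}$ (which you do address), leaving it unproved is a real omission, even though the paper itself states the identity rather tersely. To complete your argument you must (i) use the exchanged dual actions $(\mathscr{A}^{*},R_{\ast}^{*},L_{\ast}^{*},\alpha^{*})$ throughout, and (ii) actually derive the displayed identity from the cocycle condition rather than postulating that the signs ``reassemble''.
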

\begin{proof}
Define a linear map $T:\mathscr{A}\to \mathscr{A}^*$ by
$\langle T(x), y \rangle=B(x, y), \quad
\forall x,\ y,\ z \in \mathscr{A}.$\\
 Let $f, g \in \mathscr{A}^*$, and let $x = T^{-1}(f)$, $y = T^{-1}(g)$. Then
$$f = T(x), \quad g = T(y) \Rightarrow f(z) = B(x, z), \quad g(z) = B(y, z), \quad \forall z \in \mathscr{A}.$$

We want to show that:
\[
x \ast y = T^{-1}(R^*(x)(g) + L^*(y)(f)).
\]

We evaluate both sides on an element $z \in \mathscr{A}$. To do this, we use the  identity:
\[
B(x \ast y, z) = B(x, y \ast z) + B(y, z \ast x).
\]

Now observe:
\begin{align*}
R^*(x)(g)(z) &= g(z \ast x) = B(y, z \ast x), \\
L^*(y)(f)(z) &= f(y \ast z) = B(x, y \ast z),
\end{align*}
so that:
\[
R^*(x)(g)(z) + L^*(y)(f)(z) = B(y, z \ast x) + B(x, y \ast z) = B(x \ast y, z).
\]

Therefore, for all $z \in A$,
\[
B(x \ast y, z) = (R^*(x)(g) + L^*(y)(f))(z),
\]
which implies (by the non-degeneracy of $B$) that:
\[
x \ast y = T^{-1}(R^*(x)(g) + L^*(y)(f)).
\]
 Let $f \in A^*$, and let $x = T^{-1}(f)$, so that $f(y) = B(x, y)$.

Then:
\[
\alpha^*(f)(y) = f(\alpha(y)) = B(x, \alpha(y)) = B(\alpha(x), y) \quad \text{(by invariance)}.
\]

Therefore:
\[
\alpha^*(f) = T(\alpha(x)) = T(\alpha(T^{-1}(f))) \Rightarrow T^{-1}(\alpha^*(f)) = \alpha(T^{-1}(f)).
\]

Thus we obtain:
\[
T^{-1} \circ \alpha^* = \alpha \circ T^{-1}.
\]
we conclude that $T^{-1}$ is an $\mathcal{O}$-operator of $(\mathscr{A}, \ast,\alpha)$ associated to the bimodule $(\mathscr{A}^*, R^*, L^*,\alpha^*)$.\\
By Theorem~\ref{th000}, there is a compatible Hom-rhizaform algebra structure $\succ$, $\prec$ on $(\mathscr{A}, \ast, \alpha)$ given by:
\begin{align*}
x \succ y &= T^{-1}(R^*(x)(T(y))), \\
x \prec y &= T^{-1}(L^*(y)(T(x))), \quad \forall x, y \in \mathscr{A},
\end{align*}
which gives exactly Equation~\ref{eq000}.
\end{proof}
\section{Nilpotent Hom-rhizaform algebras}
\begin{definition}
 A \textit{Hom-Rhizaform algebra} $(\mathscr{A},\succ,\prec,\alpha)$ is said to be \textit{right nilpotent} (respectively, \textit{left nilpotent}) if it is \textit{right nilpotent} (respectively, \textit{left nilpotent}) as an algebra :\\ Let $A$ be an $n$-dimensional rhizaform algebra and $(e_1,e_2,\dots,e_n)$ a basis of $A$. An algebra structure on $A$ is determined by a set of structure constants $\alpha^{\,k}_{ij}$ and $\beta^{\,q}_{lp}$ defined by
\[
e_i \succ e_j = \sum_s \gamma^{\,s}_{ij}\,e_s, \qquad 
e_l \prec e_q = \sum_{t} \delta^{\,t}_{lp}\,e_t,
\qquad 1\le i,j,l,p\le n.
\]\\
 Let $\mathscr{M}$ and $\mathscr{N}$ be subsets of $\mathscr{A}$. We define
\[
\mathscr{M} \diamond \mathscr{N} := \mathscr{M} \succ \mathscr{N} + \mathscr{M} \prec \mathscr{N},\]
where\\
\begin{align*}
&\mathscr{M} \succ \mathscr{N} = \left\{ \sum_{i,j=1} \alpha_{ij} \, d_i \succ d_j \;\middle|\; \alpha_{ij} \in \mathbb{C},\, d_i \in \mathscr{M},\, d_j \in \mathscr{N}\right\},\\
&\mathscr{M} \prec \mathscr{N} = \left\{ \sum_{k,s=1} \beta_{ks} \, d_k \prec d_s \;\middle|\; \beta_{ks} \in \mathbb{C},\, d_k \in \mathscr{M},\, d_s \in \mathscr{N} \right\}.
\end{align*}
 consider the following series:\\
\begin{align*}
&\mathscr{A}^{\langle 1 \rangle} = \mathscr{A}, \quad \mathscr{A}^{\langle k+1 \rangle} = \mathscr{A}^{\langle k \rangle} \diamond \mathscr{A},\\
&\mathscr{A}^{\{1\}} = \mathscr{A}, \quad \mathscr{A}^{\{k+1\}} = \mathscr{A}\diamond \mathscr{A}^{\{k+1\}},\\
&\mathscr{A}^1 = \mathscr{A}, \quad \mathscr{A}^{k+1} = \mathscr{A}^1 \diamond \mathscr{A}^k + \mathscr{A}^2 \diamond \mathscr{A}^{k-1} + \cdots + \mathscr{A}^k \diamond \mathscr{A}^1.\\
\end{align*}
A \textit{Rhizaform algebra} $\mathscr{A}$ is said to be \textit{right nilpotent} (respectively, \textit{left nilpotent}), if there exists $k \in \mathbb{N}$ (respectively, $p \in \mathbb{N}$) such that $\mathscr{A}^{\langle k \rangle} = 0$ (respectively, $\mathscr{A}^{\{p\}} = 0$). 
A \textit{Rhizaform algebra} $\mathscr{A}$ is said to be \textit{nilpotent} if there exists $s \in \mathbb{N}$ such that $\mathscr{A}^{s} = 0$.\\
Note that for a multiplicative Hom-Lie algebra $\mathscr{A}$ we have $\alpha(\mathscr{A}^n)\subset\mathscr{A}^n$ for any n.
\end{definition}
\begin{lem}
 For any $g, h \in \mathbb{N}$ the following inclusions hold:
\[
\mathscr{A}^{\langle g \rangle} \diamond \mathscr{A}^{\langle h \rangle} \subseteq \mathscr{A}^{\langle g + h \rangle}, \quad 
\mathscr{A}^{g} \diamond \mathscr{A}^{h} \subseteq \mathscr{A}^{g + h}.
\]
\end{lem}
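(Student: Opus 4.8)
The plan is to prove both inclusions by induction — the first on $h$ (the second factor) for fixed $g$, and the second by a double/strong induction on $g+h$. Throughout I would freely use the obvious monotonicity facts: each of the series $\mathscr{A}^{\langle k\rangle}$, $\mathscr{A}^{k}$ is decreasing in $k$ (which itself follows from the definitions by an easy induction, since $\mathscr{A}\diamond\mathscr{A}\subseteq\mathscr{A}$), that $\diamond$ distributes over sums of subsets, and that $\diamond$ is monotone in each argument, i.e. $\mathscr{M}\subseteq\mathscr{M}'$ and $\mathscr{N}\subseteq\mathscr{N}'$ imply $\mathscr{M}\diamond\mathscr{N}\subseteq\mathscr{M}'\diamond\mathscr{N}'$. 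These are immediate from the definitions of $\mathscr{M}\succ\mathscr{N}$ and $\mathscr{M}\prec\mathscr{N}$ as spans.

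First I would establish $\mathscr{A}^{\langle g\rangle}\diamond\mathscr{A}^{\langle h\rangle}\subseteq\mathscr{A}^{\langle g+h\rangle}$ by induction on $h$. For $h=1$ this is exactly the defining recursion $\mathscr{A}^{\langle g+1\rangle}=\mathscr{A}^{\langle g\rangle}\diamond\mathscr{A}$. For the inductive step, I want to rewrite a product $\mathscr{A}^{\langle g\rangle}\diamond\mathscr{A}^{\langle h+1\rangle}=\mathscr{A}^{\langle g\rangle}\diamond(\mathscr{A}^{\langle h\rangle}\diamond\mathscr{A})$ and "reassociate" it into $(\mathscr{A}^{\langle g\rangle}\diamond\mathscr{A}^{\langle h\rangle})\diamond\mathscr{A}$, which by the induction hypothesis and the defining recursion lands in $\mathscr{A}^{\langle g+h\rangle}\diamond\mathscr{A}=\mathscr{A}^{\langle g+h+1\rangle}$. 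The reassociation is where the Hom-rhizaform axioms \eqref{req1}, \eqref{req2}, \eqref{eq8} enter: expanding $x\diamond(y\diamond z)$ and $(x\diamond y)\diamond z$ in terms of the four basic products $\succ,\prec$ applied in all bracketings, the axioms (together with $\alpha\circ\succ=\succ\circ\alpha^{\otimes}$, $\alpha\circ\prec=\prec\circ\alpha^{\otimes}$) let me re-express a left-bracketed product as a sum of right-bracketed products of elements of the same subsets (up to applying $\alpha$, which preserves each $\mathscr{A}^{\langle k\rangle}$ by the remark at the end of the previous section). The analogous argument with the series $\mathscr{A}^{k}$ proceeds by strong induction on $n=g+h$: one expands $\mathscr{A}^{g}\diamond\mathscr{A}^{h}$ using $\mathscr{A}^{h}=\sum_{i=1}^{h-1}\mathscr{A}^{i}\diamond\mathscr{A}^{h-i}$, reassociates each term $\mathscr{A}^{g}\diamond(\mathscr{A}^{i}\diamond\mathscr{A}^{h-i})$ into $(\mathscr{A}^{g}\diamond\mathscr{A}^{i})\diamond\mathscr{A}^{h-i}\subseteq\mathscr{A}^{g+i}\diamond\mathscr{A}^{h-i}$ by the induction hypothesis (valid since $g+i<g+h$), and sums over $i$, recognizing the result as one of the summands defining $\mathscr{A}^{g+h}$; the base cases $h=1$ or $g=1$ are again the defining recursions, using the decreasing property where needed.

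The main obstacle is the reassociation step: $\diamond$ is emphatically not associative — it is built from a Hom-rhizaform structure, so only the three identities \eqref{req1}, \eqref{req2}, \eqref{eq8} constrain the way $\succ$ and $\prec$ interact, and moreover the $\alpha$'s shift around when one moves brackets. So "$(x\diamond y)\diamond z$ versus $x\diamond(y\diamond z)$" is not an equality of sets but an inclusion-up-to-$\alpha$ that must be extracted carefully from the axioms, component by component over the four products. I expect the cleanest route is to prove the auxiliary containment $(\mathscr{M}\diamond\mathscr{N})\diamond\mathscr{P}\subseteq\alpha(\mathscr{M})\diamond(\mathscr{N}\diamond\mathscr{P})+\text{(possibly }\alpha(\mathscr{M})\diamond(\mathscr{N}\diamond\mathscr{P})\text{ with the other bracketing inside)}$ directly from \eqref{req1}--\eqref{eq8}, and then feed it into both inductions uniformly; once that lemma is in hand the two displayed inclusions follow mechanically. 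The only other subtlety is bookkeeping the $\alpha$'s, which is harmless because $\alpha(\mathscr{A}^{\langle k\rangle})\subseteq\mathscr{A}^{\langle k\rangle}$ and $\alpha(\mathscr{A}^{k})\subseteq\mathscr{A}^{k}$ — a fact I would record (with a one-line induction) before starting.
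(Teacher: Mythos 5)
The paper states this lemma without any proof, so there is no argument of the authors to compare against; what must be assessed is whether your plan actually closes the gap, and it does not. The entire content of the statement is the ``reassociation'' step, and your proposal leaves exactly that step unproved --- and it is not available in the form you describe. First, the direction is mismatched: your induction on $h$ needs $\mathscr{A}^{\langle g\rangle}\diamond\bigl(\mathscr{A}^{\langle h\rangle}\diamond\mathscr{A}\bigr)\subseteq\bigl(\mathscr{A}^{\langle g\rangle}\diamond\mathscr{A}^{\langle h\rangle}\bigr)\diamond\mathscr{A}$, i.e.\ right-bracketed products inside left-bracketed ones, while the auxiliary containment you propose to extract from the axioms goes the other way, $(\mathscr{M}\diamond\mathscr{N})\diamond\mathscr{P}\subseteq\alpha(\mathscr{M})\diamond(\mathscr{N}\diamond\mathscr{P})+\cdots$, so it would not feed into your induction even if proved. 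Second, neither direction follows from \eqref{req1}--\eqref{eq8}: these identities control $\alpha(x)\succ(y\succ z)$, $\alpha(x)\succ(y\prec z)$, and only the \emph{sum} $\alpha(x)\prec(y\succ z)+\alpha(x)\prec(y\prec z)$; the individual product $\alpha(x)\prec(y\succ z)$ is unconstrained. Concretely, already with $\alpha=\mathrm{id}$ (ordinary rhizaform algebras), in the free algebra the arity-three relations involve $x\prec(y\succ z)$ and $x\prec(y\prec z)$ only through their sum, so every element of $(\mathscr{A}\diamond\mathscr{A})\diamond\mathscr{A}$ has equal coefficients on these two monomials while $x\prec(y\succ z)$ alone does not; hence the rewriting of $x\diamond(y\diamond z)$ into left-bracketed terms that you rely on fails already for $g=1$, $h=2$. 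Third, even for the products the axioms do control, the outer factor must lie in the image of $\alpha$ (they rewrite $\alpha(x)\succ(\,\cdot\,)$ or $(\,\cdot\,)\prec\alpha(z)$); a general element of $\mathscr{A}^{\langle g\rangle}$ need not have this form, and the fact $\alpha(\mathscr{A}^{\langle k\rangle})\subseteq\mathscr{A}^{\langle k\rangle}$, which you correctly note should be recorded, only goes the unhelpful way --- one would need something like surjectivity of $\alpha$, which is nowhere assumed.

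By contrast, the second inclusion $\mathscr{A}^{g}\diamond\mathscr{A}^{h}\subseteq\mathscr{A}^{g+h}$ needs none of this machinery: by definition $\mathscr{A}^{g+h}=\sum_{i=1}^{g+h-1}\mathscr{A}^{i}\diamond\mathscr{A}^{g+h-i}$, and $\mathscr{A}^{g}\diamond\mathscr{A}^{h}$ is literally one of the summands, so your strong induction there is superfluous and, as set up, it inherits the same unproved reassociation. In summary: the proposal reduces the lemma to an auxiliary containment that is stated in the wrong direction for the intended induction, does not follow from the rhizaform identities applied monomial by monomial (two of the four mixed products occur only in summed form), and whose $\alpha$-bookkeeping requires hypotheses on $\alpha$ that are not assumed; as it stands the first inclusion is not established.
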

\begin{lem}
Let $(\mathscr{A}, \succ, \prec, \alpha)$ be a Hom-Rhizaform algebra. Then for any $g \in \mathbb{N}$ we have:
\[
\mathscr{A}^{\langle g \rangle} = \mathscr{A}^{\{g\}} = \mathscr{A}^{g}.
\]\\
In conclusion, the concepts of left nilpotency, right nilpotency, and over all nilpotency are equivalent in Hom-rhizaform algebras.
\end{lem}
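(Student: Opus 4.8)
The plan is to reduce the triple equality to the single statement that every iterated $\diamond$-product of $g$ copies of $\mathscr{A}$, under any bracketing, spans one and the same subspace of $\mathscr{A}$. Indeed $\mathscr{A}^{\langle g\rangle}$ is the left-normed product $(\,\cdots((\mathscr{A}\diamond\mathscr{A})\diamond\mathscr{A})\cdots)\diamond\mathscr{A}$, $\mathscr{A}^{\{g\}}$ the right-normed one, and $\mathscr{A}^{g}=\sum_{i+j=g}\mathscr{A}^{i}\diamond\mathscr{A}^{j}$ unfolds into the sum of \emph{all} bracketings; so the assertion is exactly that they agree. The key input is that $\diamond=\ast$ and $(\mathscr{A},\ast,\alpha)$ is Hom-anti-associative, so $(x\diamond y)\diamond\alpha(z)=-\alpha(x)\diamond(y\diamond z)$: a sign is harmless for a subspace ($-\mathscr{X}=\mathscr{X}$), so $\diamond$ ``reassociates'' up to the twist $\alpha$, and iterating this should collapse all bracketings. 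Concretely, expanding $\diamond=\succ+\prec$ and using the three Hom-rhizaform identities \eqref{req1}, \eqref{eq8}, \eqref{req2} one rewrites $x\succ(y\succ z)$, $x\succ(y\prec z)$ and $x\prec(y\succ z+y\prec z)$ as $-(x\diamond y)\succ z$, $-(x\succ y)\prec z$ and $-(x\prec y)\prec z$; since $\mathscr{M}\succ\mathscr{N}$ and $\mathscr{M}\prec\mathscr{N}$ lie in $\mathscr{M}\diamond\mathscr{N}$, this yields (and, read backwards, the reverse of) the reassociation inclusion $\mathscr{M}\diamond(\mathscr{N}\diamond\mathscr{P})\subseteq(\mathscr{M}\diamond\mathscr{N})\diamond\mathscr{P}$, i.e. equality of these two subspaces.

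First I would dispose of the easy half: by induction on $g$ each of the three series is decreasing, and $\mathscr{A}^{\langle g\rangle}\subseteq\mathscr{A}^{g}$ and $\mathscr{A}^{\{g\}}\subseteq\mathscr{A}^{g}$, since $\mathscr{A}^{\langle k+1\rangle}=\mathscr{A}^{\langle k\rangle}\diamond\mathscr{A}\subseteq\mathscr{A}^{k}\diamond\mathscr{A}^{1}\subseteq\mathscr{A}^{k+1}$ by the preceding Lemma and the monotonicity of $\diamond$, and symmetrically on the left.

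The substantive half is $\mathscr{A}^{g}\subseteq\mathscr{A}^{\langle g\rangle}$, by strong induction on $g$ (trivial for $g\le 2$). In $\mathscr{A}^{g}=\sum_{i+j=g}\mathscr{A}^{i}\diamond\mathscr{A}^{j}$ the term with $j=1$ lies in $\mathscr{A}^{\langle g-1\rangle}\diamond\mathscr{A}=\mathscr{A}^{\langle g\rangle}$ by the inductive hypothesis; for $j\ge 2$, expand $\mathscr{A}^{j}=\sum_{p+q=j}\mathscr{A}^{p}\diamond\mathscr{A}^{q}$ and apply the reassociation inclusion: $\mathscr{A}^{i}\diamond(\mathscr{A}^{p}\diamond\mathscr{A}^{q})\subseteq(\mathscr{A}^{i}\diamond\mathscr{A}^{p})\diamond\mathscr{A}^{q}\subseteq\mathscr{A}^{i+p}\diamond\mathscr{A}^{q}\subseteq\mathscr{A}^{\langle i+p\rangle}\diamond\mathscr{A}^{\langle q\rangle}\subseteq\mathscr{A}^{\langle g\rangle}$, using the preceding Lemma and the inductive hypothesis ($i+p<g$ and $q<g$). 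With the easy half this gives $\mathscr{A}^{\langle g\rangle}=\mathscr{A}^{g}$ for all $g$. Finally $\mathscr{A}^{\langle g\rangle}\subseteq\mathscr{A}^{\{g\}}$ follows by a short induction: $\mathscr{A}^{\langle g\rangle}=\mathscr{A}^{\langle g-1\rangle}\diamond\mathscr{A}\subseteq\mathscr{A}^{\{g-1\}}\diamond\mathscr{A}=(\mathscr{A}\diamond\mathscr{A}^{\{g-2\}})\diamond\mathscr{A}\subseteq\mathscr{A}\diamond(\mathscr{A}^{\{g-2\}}\diamond\mathscr{A})$, while $\mathscr{A}^{\{g-2\}}\diamond\mathscr{A}\subseteq\mathscr{A}^{g-2}\diamond\mathscr{A}=\mathscr{A}^{\langle g-2\rangle}\diamond\mathscr{A}=\mathscr{A}^{\langle g-1\rangle}=\mathscr{A}^{\{g-1\}}$ (using the equality just proved and the inductive hypothesis), so $\mathscr{A}^{\langle g\rangle}\subseteq\mathscr{A}\diamond\mathscr{A}^{\{g-1\}}=\mathscr{A}^{\{g\}}$; combined with $\mathscr{A}^{\{g\}}\subseteq\mathscr{A}^{g}=\mathscr{A}^{\langle g\rangle}$ this gives $\mathscr{A}^{\langle g\rangle}=\mathscr{A}^{\{g\}}=\mathscr{A}^{g}$. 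The equivalence of nilpotency notions is then immediate, as $\mathscr{A}^{\langle k\rangle}=0$ for some $k$ iff $\mathscr{A}^{\{p\}}=0$ for some $p$ iff $\mathscr{A}^{s}=0$ for some $s$.

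The part that needs care — the main obstacle — is the twisting map $\alpha$: the identities \eqref{req1}, \eqref{req2}, \eqref{eq8} and Hom-anti-associativity all carry an $\alpha$ on the outer arguments (for instance $(x\diamond y)\succ\alpha(z)=-\alpha(x)\succ(y\succ z)$), so the reassociation manipulations above are literally valid only after these $\alpha$'s are absorbed. I would handle this using multiplicativity, $\alpha\circ\succ=\succ\circ\alpha^{\otimes 2}$ and $\alpha\circ\prec=\prec\circ\alpha^{\otimes 2}$, together with the $\alpha$-invariance of the filtration terms, $\alpha(\mathscr{A}^{\langle k\rangle})\subseteq\mathscr{A}^{\langle k\rangle}$ and $\alpha(\mathscr{A}^{k})\subseteq\mathscr{A}^{k}$ (the analogue of the remark $\alpha(\mathscr{A}^{n})\subseteq\mathscr{A}^{n}$): applying $\alpha$ to an argument never leaves the subspace under consideration, so every inclusion above survives, and the signs are irrelevant since we only ever compare subspaces.
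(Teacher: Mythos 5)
The paper gives no proof of this lemma (nor of the preceding one), so there is nothing to compare your route against; judged on its own terms, your argument has a genuine gap at its central step, the ``reassociation inclusion'' $\mathscr{M}\diamond(\mathscr{N}\diamond\mathscr{P})\subseteq(\mathscr{M}\diamond\mathscr{N})\diamond\mathscr{P}$. The axioms \eqref{req1} and \eqref{eq8} do let you rewrite $\alpha(x)\succ(y\succ z)$ and $\alpha(x)\succ(y\prec z)$ as left-normed products, but \eqref{req2} only controls the \emph{sum} $\alpha(x)\prec(y\succ z)+\alpha(x)\prec(y\prec z)$ with the same $y,z$. Since $\mathscr{N}\diamond\mathscr{P}$ is by definition the span of the individual products $v\succ w$ and $v\prec w$, the space $\mathscr{M}\diamond(\mathscr{N}\diamond\mathscr{P})$ contains the single terms $u\prec(v\succ w)$ and $u\prec(v\prec w)$ separately, and no rhizaform identity (nor any consequence of them in degree three) rewrites such a term as a combination of left-normed products. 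This is not a presentational slip: in the free rhizaform algebra on one generator (a Hom-rhizaform algebra with $\alpha=\mathrm{id}$) the degree-three component is five-dimensional with basis the four left-normed monomials together with $x\prec(x\succ x)$, so $x\prec(x\succ x)\in\mathscr{A}^{\{3\}}\setminus\mathscr{A}^{\langle 3\rangle}$ and $\mathscr{A}^{\langle 3\rangle}\subsetneq\mathscr{A}^{3}$. Hence the collapse of all bracketings cannot be deduced from the stated axioms and definitions alone; one would have to change the definition of the powers (e.g.\ build them from the single product $\ast=\succ+\prec$, which Hom-anti-associativity does reassociate up to sign and $\alpha$) or impose extra hypotheses.

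There is a second, independent gap in your treatment of the twist. All three identities require the outer left factor to be of the form $\alpha(x)$, and your proposed remedy --- multiplicativity together with the invariance $\alpha(\mathscr{A}^{k})\subseteq\mathscr{A}^{k}$ --- points the wrong way: to apply \eqref{req1} to a general generator $u\succ(v\succ w)$ with $u\in\mathscr{A}^{i}$ you must write $u=\alpha(x)$ with $x$ in a controlled subspace, i.e.\ you need $\mathscr{A}^{i}\subseteq\alpha(\mathscr{A}^{i})$, which does not follow from $\alpha(\mathscr{A}^{i})\subseteq\mathscr{A}^{i}$ and fails in general unless $\alpha$ is surjective (in practice bijective, so that $\alpha^{-1}$ is again multiplicative). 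The invariance you invoke only helps with the $\alpha(z)$ appearing on the outer right after the rewrite. So even the part of your reassociation that is classically valid needs an explicit surjectivity/bijectivity assumption on $\alpha$ that neither you nor the paper states. The ``easy half'' ($\mathscr{A}^{\langle g\rangle},\mathscr{A}^{\{g\}}\subseteq\mathscr{A}^{g}$, and the equivalence of the nilpotency notions once the equalities are granted) is fine.
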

\begin{thm}
 A Hom-Rhizaform algebra $(\mathscr{A}, \succ, \prec, \alpha)$ is nilpotent if and only if $(\mathscr{A}, \succ, \alpha)$ and $(\mathscr{A}, \prec, \alpha)$ are nilpotent.   
\end{thm}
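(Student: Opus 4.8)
Throughout, write $\mathscr{A}_\succ^{k}$ and $\mathscr{A}_\prec^{k}$ for the $k$-th power ideals of the one-operation Hom-algebras $(\mathscr{A},\succ,\alpha)$ and $(\mathscr{A},\prec,\alpha)$, and $\mathscr{A}^{k}$ for the rhizaform power of the definition. By the preceding Lemma the three rhizaform series $\mathscr{A}^{\langle k\rangle},\mathscr{A}^{\{k\}},\mathscr{A}^{k}$ agree, so "$(\mathscr{A},\succ,\prec,\alpha)$ nilpotent" means $\mathscr{A}^{n}=0$ for some $n$; this is the target. One cannot simply invoke nilpotency of the associated Hom-anti-associative algebra $(\mathscr{A},\ast,\alpha)$, because $\mathscr{M}\diamond\mathscr{N}=\mathscr{M}\succ\mathscr{N}+\mathscr{M}\prec\mathscr{N}$ is in general strictly larger than $\mathrm{span}\{m\ast n\}$, so rhizaform nilpotency is a priori stronger than $\ast$-nilpotency and the two operations must be treated separately.

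For the direction $(\Rightarrow)$ the plan is a plain containment. I would prove by induction on $k$ that $\mathscr{A}_\succ^{k}\subseteq\mathscr{A}^{k}$ and $\mathscr{A}_\prec^{k}\subseteq\mathscr{A}^{k}$: the case $k=1$ is trivial, and for the step one uses $\mathscr{M}\succ\mathscr{N}\subseteq\mathscr{M}\diamond\mathscr{N}$, $\mathscr{M}\prec\mathscr{N}\subseteq\mathscr{M}\diamond\mathscr{N}$ and monotonicity of the operations to get $\mathscr{A}_\succ^{k+1}=\sum_{i+j=k+1}\mathscr{A}_\succ^{i}\succ\mathscr{A}_\succ^{j}\subseteq\sum_{i+j=k+1}\mathscr{A}^{i}\diamond\mathscr{A}^{j}=\mathscr{A}^{k+1}$, and likewise for $\prec$. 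Hence $\mathscr{A}^{n}=0$ forces $\mathscr{A}_\succ^{n}=\mathscr{A}_\prec^{n}=0$, i.e. both $(\mathscr{A},\succ,\alpha)$ and $(\mathscr{A},\prec,\alpha)$ are nilpotent.

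For $(\Leftarrow)$, assume $\mathscr{A}_\succ^{p}=0$ and $\mathscr{A}_\prec^{q}=0$; one must produce $n$ with $\mathscr{A}^{n}=0$, and by the Lemma it suffices to annihilate the left-normed series $\mathscr{A}^{\langle n\rangle}$. The plan is an induction on $n$ showing that $\mathscr{A}^{\langle n\rangle}$ rewrites as a sum of subspaces, each of which is a pure $\succ$-product with at least $p$ factors or a pure $\prec$-product with at least $q$ factors, hence lying in $\mathscr{A}_\succ^{p}+\mathscr{A}_\prec^{q}=0$, once $n$ exceeds an explicit bound linear in $p+q$ (the worked case $p=q=3$ yields $\mathscr{A}^{\langle 4\rangle}=0$, suggesting $n=p+q-2$). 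The engine is the three defining identities: \eqref{eq8}, read as $(x\succ y)\prec\alpha(z)=-\alpha(x)\succ(y\prec z)$, interchanges a $\succ$ sitting immediately above a $\prec$; while \eqref{req1}, read as $(x\succ y+x\prec y)\succ\alpha(z)=-\alpha(x)\succ(y\succ z)$, and \eqref{req2}, read as $\alpha(x)\prec(y\succ z+y\prec z)=-(x\prec y)\prec\alpha(z)$, collapse a product whose inner factor is the full $\ast$-sum into a strictly more homogeneous product. The device — visible in the $p=q=3$ computation — is that at any node one of whose factors is all of $\mathscr{A}$ one may add a vanishing term of shape $\mathscr{A}\prec(\mathscr{A}\prec X)\subseteq\mathscr{A}_\prec^{3}$ or $(\mathscr{A}\succ X)\succ\mathscr{A}\subseteq\mathscr{A}_\succ^{3}$ to promote a lone $\prec$ or $\succ$ into a $\ast$-sum and then apply \eqref{req1}/\eqref{req2}; iterating this lengthens a homogeneous block one factor at a time until it reaches $p$ or $q$ and dies, while multiplicativity $\alpha(\mathscr{A}^{k})\subseteq\mathscr{A}^{k}$ and the $\alpha$-compatibility of $\succ,\prec$ keep every $\alpha$-twist inside the relevant subspace.

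The main obstacle is exactly this bookkeeping. Three points need care: tracking the powers of $\alpha$ and the signs generated by iterating \eqref{eq8}, \eqref{req1}, \eqref{req2}; making the "augment by a vanishing term, then collapse" step legitimate at every node, which is delicate since \eqref{req1} and \eqref{req2} only rewrite the symmetric combination $\succ+\prec$, never an individual operation, so the complementary vanishing piece must always be available; and organizing the induction so that the longest homogeneous block provably grows, producing a definite $n$. If $\alpha$ is not assumed bijective, one must in addition route all applications of the identities through $\alpha(\mathscr{A}^{k})\subseteq\mathscr{A}^{k}$ rather than inverting $\alpha$; otherwise assume $\alpha$ regular from the start. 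Once $\mathscr{A}^{n}=0$ is obtained, the Lemma gives the nilpotency of $(\mathscr{A},\succ,\prec,\alpha)$, finishing the proof.
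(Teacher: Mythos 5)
Your forward direction is fine: the containment $\mathscr{A}_\succ^{k}\subseteq\mathscr{A}^{k}$, $\mathscr{A}_\prec^{k}\subseteq\mathscr{A}^{k}$ by induction is exactly the content of the paper's remark that this implication ``is clear.'' The substance of the theorem is the converse, and there your text is a plan rather than a proof. The statement that actually needs to be established --- that every mixed monomial of sufficiently large degree can be rewritten, via \eqref{req1}, \eqref{req2}, \eqref{eq8}, as a linear combination of monomials each containing a pure $\succ$-block of length $\geq s$ or a pure $\prec$-block of length $\geq t$ --- is never formulated as a lemma nor proved. The ``augment by a vanishing term, then collapse'' device is described only in the shape it takes for $p=q=3$ (terms of the form $\mathscr{A}\prec(\mathscr{A}\prec X)$ or $(\mathscr{A}\succ X)\succ\mathscr{A}$), the ``worked case'' you appeal to is not exhibited, and the induction that is supposed to make ``the longest homogeneous block grows'' precise is not set up; you yourself flag the sign/$\alpha$-bookkeeping and the fact that \eqref{req1}--\eqref{req2} only rewrite the combination $\succ+\prec$ as unresolved obstacles. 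So, judged as a proof, the converse direction is missing its key combinatorial step. There is also a small quantitative discrepancy: you suggest $\mathscr{A}^{p+q-2}=0$, whereas the paper asserts vanishing only for exponents strictly greater than $s+t-2$.

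In fairness, you should know that the paper's own proof of the converse is a single sentence: it fixes $s,t$ with $\mathscr{A}^s_\succ=0$, $\mathscr{A}^t_\prec=0$ and simply asserts that $\mathscr{A}^{p}=0$ for $p>s+t-2$, with no rewriting argument, no use of the rhizaform identities, and no justification of the bound (the preceding lemma identifying $\mathscr{A}^{\langle g\rangle}=\mathscr{A}^{\{g\}}=\mathscr{A}^{g}$ is likewise stated without proof). So your sketch is in the same spirit as, and more explicit than, what the paper offers; but neither your proposal nor the paper discharges the central claim, and in your case the gap is explicitly acknowledged rather than closed. To complete the argument you would need to isolate and prove the rewriting lemma above (for instance by induction on the degree and on the number of alternations between $\succ$ and $\prec$ in a monomial, using \eqref{eq8} to normalize mixed nests and \eqref{req1}, \eqref{req2} to trade a mixed outer product for pure ones), keeping the $\alpha$-twists inside the powers via $\alpha(\mathscr{A}^{k})\subseteq\mathscr{A}^{k}$ exactly as you indicate.
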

\begin{proof}
  It is clear that from the nilpotency of $(\mathscr{A}, \succ, \prec, \alpha)$ follows the nilpotency of $(\mathscr{A}, \succ, \alpha)$ and $(\mathscr{A},\prec, \alpha)$.\\
  Now let $(\mathscr{A}, \succ, \alpha)$ and $(\mathscr{A},\prec, \alpha)$  be nilpotent algebras, i.e., there exist s,t$\in \mathscr{N}$ such that $\mathscr{A}^s_\succ=0$ and $\mathscr{A}^t_\prec=0$  Then, for $p \succ s+t-2$ we have $\mathscr{A}^p= 0$. 
\end{proof}
\begin{definition}
 A Hom-rhizaform algebra $(\mathscr{A}, \succ, \prec, \alpha)$ is 2-nilpotent if
\[
(x \mathbin{\ast_{i_1}} y) \mathbin{\ast_{i_2}} \alpha(z) = \alpha(x) \mathbin{\ast_{i_3}} (y \mathbin{\ast_{i_4}} z) = 0, \quad ,\ \forall\, \ast_{i_1}, \ast_{i_2}, \ast_{i_3}, \ast_{i_4} \in \{\succ, \prec\},\ x, y, z \in \mathscr{A}.
\]
\end{definition}
\begin{proposition}

 Let $(\mathscr{A}, \succ, \prec, \alpha)$ be a Hom-rhizaform algebra satisfying
\[
x \succ y + x \prec y = 0, \quad \text{for all } x, y \in \mathscr{A}.
\]
Then $(\mathscr{A}, \succ, \prec, \alpha)$ is a 2-nilpotent algebra. 
\end{proposition}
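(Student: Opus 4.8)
The plan is to exploit the fact that the hypothesis $x \succ y + x \prec y = 0$ says precisely that the associated product $\ast$ defined by \eqref{eq0} vanishes identically, and then to read off the two ``basic'' vanishing identities directly from the defining axioms \eqref{req1} and \eqref{req2}. Everything else is sign bookkeeping.

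First I would record that $x \ast y = x \succ y + x \prec y = 0$ for all $x, y \in \mathscr{A}$. Substituting into \eqref{req1} gives $0 = (x \succ y + x \prec y)\succ \alpha(z) = -\alpha(x)\succ(y\succ z)$, hence $\alpha(x)\succ(y\succ z) = 0$ for all $x,y,z$. Likewise \eqref{req2} reads $\alpha(x)\prec(y\succ z + y\prec z) = -(x\prec y)\prec\alpha(z)$, and since $y\succ z + y\prec z = 0$ the left-hand side is $\alpha(x)\prec 0 = 0$, so $(x\prec y)\prec\alpha(z) = 0$ for all $x,y,z$. (One checks that \eqref{eq8} is then automatically consistent and yields nothing new; note also that multiplicativity of $\alpha$ is not needed here.)

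Next I would propagate these two identities to all sign patterns. Since the hypothesis gives $u\succ v = -\,u\prec v$ for all $u,v$, bilinearity lets me replace any occurrence of $\succ$ by $-\prec$, and conversely, inside any product. Thus for any $\ast_{i_1},\ast_{i_2}\in\{\succ,\prec\}$ one has $(x\ast_{i_1}y)\ast_{i_2}\alpha(z) = \pm\,(x\prec y)\prec\alpha(z) = 0$, and for any $\ast_{i_3},\ast_{i_4}\in\{\succ,\prec\}$ one has $\alpha(x)\ast_{i_3}(y\ast_{i_4}z) = \pm\,\alpha(x)\succ(y\succ z) = 0$. This is exactly the defining condition of $2$-nilpotency, so $(\mathscr{A},\succ,\prec,\alpha)$ is a $2$-nilpotent algebra.

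There is essentially no obstacle in this argument; the only point requiring a little care is the sign bookkeeping when collapsing the eight a priori distinct products down to the two identities obtained from \eqref{req1} and \eqref{req2}.
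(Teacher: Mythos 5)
Your proposal is correct and follows essentially the same route as the paper: both derive $\alpha(x)\succ(y\succ z)=0$ and $(x\prec y)\prec\alpha(z)=0$ from \eqref{req1}--\eqref{req2} using the vanishing of the sum product, and then obtain the remaining sign patterns by using the hypothesis $x\succ y=-x\prec y$ to swap the operations (the paper simply writes out all eight cases explicitly, while you compress them into a sign argument).
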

  \begin{proof}
  Let \( x, y, z \in \mathscr{A} \). Then, we obtain:
    \begin{align*}
\alpha(x) \succ (y \succ z) &= - (x \succ y + x \prec y) \succ \alpha(z) = 0, \\
(x \prec y) \prec z &= - \alpha(x) \prec (y \succ z + y \prec z) = 0, \\
\alpha(x) \prec (y \succ z) &= \alpha(x) \prec (y \succ z) + \alpha(x) \succ (y \succ z) = 0, \\
(x \prec y) \succ \alpha(z) &= (x \prec y) \succ \alpha(z) + (x \prec y) \prec \alpha(z) = 0, \\
(x \succ y) \succ \alpha(z) &= (x \succ y) \succ \alpha(z) + (x \prec y) \succ \alpha(z) \nonumber \\
&= (x \succ y + x \prec y) \succ \alpha(z) = 0, \\
\alpha(x) \prec (y \prec z) &= \alpha(x) \prec (y \prec z) + \alpha(x) \prec (y \succ z) \nonumber \\
&= \alpha(x) \prec (y \prec z + y \succ z) = 0, \\
(x \succ y) \prec \alpha(z) &= (x \succ y) \prec \alpha(z) + (x \succ y) \succ \alpha(z) = 0, \\
\alpha(x) \succ (y \prec z) &= (x \succ y) \prec \alpha(z) = 0.
\end{align*}
	This ends the proof.
 \end{proof}
\section{Hom-Rhizaform family Algebras}
\begin{definition}
 Let $(\Omega,\cdot)$ be a semi  group. A Hom-Rhizaform family algebra $(\mathscr{A},\{\prec_\gamma,\ \succ_\gamma;\ \gamma \in \Omega \},\ \alpha)$ is a $k$-vector space $\mathscr{A}$ equipped with a family of binary operations $(\prec_\gamma, \succ_\gamma;\ \gamma \in \Omega)$ and a lineair map $\alpha:\ \mathscr{A}\ \to \mathscr{A}$ such that for all $x, y, z \in \mathscr{A}$ and $\lambda,\ \omega \in \Omega$, the following conditions hold:
\begin{align}
\alpha \circ \prec_\omega &= \prec_\omega \circ \alpha^{\otimes 2}
\quad \text{and} \quad
\alpha \circ \succ_\omega = \succ_\omega \circ \alpha^{\otimes 2}\\
 (x \prec_\lambda y) \prec_\omega \alpha(z) &= -\alpha(x) \prec_{\lambda \cdot \omega} (y \prec_\omega z + y \succ_\lambda z)\\
    (x \succ_\lambda y) \prec_\omega \alpha(z)&=- \alpha(x) \succ_\lambda (y \prec_\omega z)\\
    (x \prec_\omega y + x \succ_\lambda y) \succ_{\lambda \cdot \omega} \alpha(z) &= -\alpha(x) \succ_\lambda (y \succ_\omega z)
\end{align}
\end{definition}
\begin{rem}
  \textit{When the semi group is taken to be the trivial monoid with one single 
element, a Hom-rhizaform family algebra is precisely a Hom-rhizaform algebra. 
Furthermore, a Hom-rhizaform family algebra reduces to a rhizaform family 
algebra when \( \alpha= \mathrm{Id}_A \).}  \end{rem}
\begin{definition}
Let $\Omega$ be a semi group. 
A Hom-anti-associative family algebras $(A, \cdot_{\lambda},\ \alpha)_{\lambda, \in \Omega}$ is a vector space $\mathscr{A}$ equipped with families of bilinear map
\[
\ast_{\lambda,\omega} : \mathscr{A} \times \mathscr{A} \to \mathscr{A} 
\]
 and a lineair map $ \alpha:\ \mathscr{A}\to \mathscr{A}$
such that
\begin{equation}
(x\ast_{\lambda,\omega}y)\ast_{\lambda\cdot\omega,\gamma}\alpha(z)=-\alpha(x)\ast_{\lambda,\omega\cdot\gamma}(y \ast_{\omega,\gamma} z), 
\end{equation}
for all $x, y, z \in \mathscr{A}$ and $\lambda, \omega,\gamma\in \Omega$.
\end{definition}
\begin{rem}
When the semi group is taken to be the trivial monoid with one single element,  a \emph{Hom-anti-associative family algebra} is precisely a \emph{Hom-anti-associative algebra}. Frurthermore, a \emph{Hom-anti-associative family algebra} reduces to a family of classical associative algebras parameterized by \(\Omega\) When \(\alpha = \mathrm{id}_\mathscr{A}\).
\end{rem}
\begin{proposition}
Let $(\mathscr{A}, \{ \prec_\omega,\ \succ_\omega \;|\; \omega \in \Omega \},\ \alpha)$ be a Hom-rhizaform family algebra. Define a bilinear operation $*_{\lambda,\Omega}$ by
  \[
  x *_{\lambda,\Omega} y := x \prec_\omega y + x \succ_\lambda y.
  \]
  Then $(\mathscr{A}, *_{\lambda,\Omega}, \alpha)$ is a Hom-anti-associative family algebra, called the \emph{associated Hom-anti-associative family algebra}. Furthermore, $(\mathscr{A}, \{ \prec_\omega,\ \succ_\omega \;|\; \omega \in \Omega \}, \alpha)$ is called a \emph{compatible Hom-rhizaform algebra structure} on $(\mathscr{A}, *_{\lambda,\Omega}, \alpha)$.
\end{proposition}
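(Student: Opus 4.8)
The plan is to verify directly that the family $\{\ast_{\lambda,\omega}\}$ satisfies the defining identity of a Hom-anti-associative family algebra, by expanding both sides of that identity through $x\ast_{\lambda,\omega}y = x\prec_\omega y + x\succ_\lambda y$ and then invoking the three Hom-rhizaform family axioms with suitably specialized semigroup parameters. First I would dispose of multiplicativity: since $\alpha\circ\prec_\omega = \prec_\omega\circ\alpha^{\otimes 2}$ and $\alpha\circ\succ_\lambda = \succ_\lambda\circ\alpha^{\otimes 2}$, one gets $\alpha(x\ast_{\lambda,\omega}y) = \alpha(x\prec_\omega y) + \alpha(x\succ_\lambda y) = \alpha(x)\prec_\omega\alpha(y) + \alpha(x)\succ_\lambda\alpha(y) = \alpha(x)\ast_{\lambda,\omega}\alpha(y)$ for all $x,y\in\mathscr{A}$ and $\lambda,\omega\in\Omega$.

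Next I would expand the left-hand side of the anti-associativity identity. Using $x\ast_{\lambda,\omega}y = x\prec_\omega y + x\succ_\lambda y$ and then $a\ast_{\lambda\cdot\omega,\gamma}\alpha(z) = a\prec_\gamma\alpha(z) + a\succ_{\lambda\cdot\omega}\alpha(z)$, the term $(x\ast_{\lambda,\omega}y)\ast_{\lambda\cdot\omega,\gamma}\alpha(z)$ splits as $(x\prec_\omega y)\prec_\gamma\alpha(z) + (x\succ_\lambda y)\prec_\gamma\alpha(z) + (x\prec_\omega y + x\succ_\lambda y)\succ_{\lambda\cdot\omega}\alpha(z)$. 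To the first summand I apply the $\prec\!\prec$-axiom with $(\omega,\gamma)$ in place of $(\lambda,\omega)$; to the second the $\succ\!\prec$-axiom with $(\lambda,\gamma)$ in place of $(\lambda,\omega)$; to the third the $\succ$-axiom verbatim. This rewrites the left-hand side as $-\alpha(x)\prec_{\omega\cdot\gamma}(y\prec_\gamma z + y\succ_\omega z) - \alpha(x)\succ_\lambda(y\prec_\gamma z) - \alpha(x)\succ_\lambda(y\succ_\omega z)$.

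Finally I would expand the right-hand side $-\alpha(x)\ast_{\lambda,\omega\cdot\gamma}(y\ast_{\omega,\gamma}z)$. Since $y\ast_{\omega,\gamma}z = y\prec_\gamma z + y\succ_\omega z$ and $\alpha(x)\ast_{\lambda,\omega\cdot\gamma}(\cdot) = \alpha(x)\prec_{\omega\cdot\gamma}(\cdot) + \alpha(x)\succ_\lambda(\cdot)$, this equals $-\alpha(x)\prec_{\omega\cdot\gamma}(y\prec_\gamma z + y\succ_\omega z) - \alpha(x)\succ_\lambda(y\prec_\gamma z) - \alpha(x)\succ_\lambda(y\succ_\omega z)$, i.e.\ precisely the expression just obtained for the left-hand side. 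Hence $(\mathscr{A},\ast_{\lambda,\omega},\alpha)$ is a Hom-anti-associative family algebra, and the ``compatible structure'' statement is then immediate from the definitions.

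The only delicate point — and the place where an error is most likely to creep in — is the bookkeeping of the semigroup subscripts: one must check that the indices produced by the right-hand sides of the three Hom-rhizaform axioms after substitution, namely $\prec_{\omega\cdot\gamma}$, $\succ_\lambda$, $\succ_\lambda$, match exactly those dictated by $\ast_{\lambda,\omega\cdot\gamma}$ and $\ast_{\omega,\gamma}$ on the right-hand side, and that the outer index $\lambda\cdot\omega$ appearing on the left is precisely the one the $\succ$-axiom demands. Beyond this careful index-matching the proof is a purely mechanical comparison of terms, so I expect no genuine obstacle.
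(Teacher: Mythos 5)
Your proposal is correct and follows essentially the same route as the paper's proof: expand $(x\ast_{\lambda,\omega}y)\ast_{\lambda\cdot\omega,\gamma}\alpha(z)$ via the definition of $\ast$, apply the $\prec\prec$-axiom with indices $(\omega,\gamma)$, the $\succ\prec$-axiom with $(\lambda,\gamma)$, and the $\succ$-axiom with outer index $\lambda\cdot\omega$, and regroup to obtain $-\alpha(x)\ast_{\lambda,\omega\cdot\gamma}(y\ast_{\omega,\gamma}z)$; your index bookkeeping matches the paper's computation exactly. The only addition is your preliminary check that $\alpha$ is multiplicative with respect to $\ast_{\lambda,\omega}$, which is harmless (the paper's definition of Hom-anti-associative family algebra does not even require it).
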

\begin{proof}
  \begin{align*}
(x\ast_{\lambda,\omega}y)\ast_{\lambda\cdot\omega,\gamma}\alpha(z)
&=(x \prec_\omega y + x \succ_\lambda y) \prec_\gamma \alpha(z) + (x \prec_\omega y + x \succ_\lambda y) \succ_{\lambda\omega} \alpha(z) \\
&=  (x \prec_\omega y) \prec_\gamma \alpha(z) + (x \succ_\lambda y) \prec_\gamma \alpha(z) - \alpha(x) \succ_\lambda (y \succ_\omega z) \\
&= -\alpha(x) \prec_{\omega\gamma} (y \prec_\gamma z + y \succ_\omega z) - \alpha(x) \succ_\lambda (y \prec_\gamma z) - \alpha(x) \succ_\lambda (y \succ_\omega z) \\
&= -\alpha(x) \prec_{\omega\gamma} (y \prec_\gamma z + y \succ_\omega z) - \alpha(x) \succ_\lambda (y \prec_\gamma z + y \succ_\omega z)\\
&=-\alpha(x)\ast_{\lambda,\omega\cdot\gamma}(y \ast_{\omega,\gamma} z).
\end{align*}  
This ends the proof.
\end{proof}
\begin{definition}
 A Rota-Baxter family of weight $0$ on a Hom-anti-associative algebra $(\mathscr{A}, *, \alpha)$ is a collection of linear operators $\{R_\lambda \mid \lambda \in \Omega\}$ on 
$(\mathscr{A}, *, \alpha)$ such that
\begin{align}
\alpha \circ R_\lambda &= R_\lambda \circ \alpha, \label{eq26}\quad \forall \lambda \in \Omega, \\
R_\lambda(a) * R_\omega(b) &= R_{\lambda\omega}\big(R_\lambda(a) * b + a * R_\omega(b)\big), \quad \forall a, b \in \mathscr{A},\ \lambda , \omega \in \Omega.
\end{align}
Then the quadruple $(\mathscr{A}, *, \alpha, \{R_\lambda \mid \lambda \in \Omega\})$ is called a \emph{Rota-Baxter family Hom-anti-associative algebra of weight $0$}. \\
If $\alpha=Id_\mathscr{A}$ we get the notion of a Rota-Baxter family of weight 0 on the associative algebra $(\mathscr{A}, *).$ 
\end{definition}
\begin{definition}
Let $(\mathscr{A}, \cdot, \alpha, \{R_\lambda, \lambda \in \Omega\})$ and $(\mathscr{B}, \top, \beta, \{P_\eta, \eta \in \Omega\})$ be two Rota-Baxter family Hom-algebras of weight $0$. 
A map $f : \mathscr{A} \to \mathscr{B}$ is called a \textit{Rota-Baxter family Hom-algebra morphism} if $f$ is Hom-algebras morphism and $f \circ R_\lambda = R_\lambda \circ f$ for each
$\lambda \in \Omega$.
\end{definition}
Now, we give a link between Rota-Baxter family Hom-algebras and ordinary Rota-Baxter Hom-algebra as follows.
Let $(\mathscr{A}, \cdot, \alpha)$ be a Hom-algebra. Consider the space $ \mathscr{A}\otimes \mathbb{K} \Omega$ with the linear map
\[
\tilde{\alpha} : \mathscr{A}\otimes \mathbb{K} \Omega \to \mathscr{A} \otimes \mathbb{K} \Omega, \quad 
\tilde{\alpha}(a \otimes \lambda) = \alpha(a) \otimes \lambda.
\]
Moreover, the multiplication on $\mathscr{A}$ induces a multiplication $\bullet$ on $\mathscr{A} \otimes \mathbb{K} \Omega$ by
\[
(a \otimes \lambda) \bullet (b \otimes \eta) := a \cdot b \otimes \lambda \eta.
\]
It is easy to see that $(\mathscr{A} \otimes \mathbb{K} \Omega, \bullet, \tilde{\alpha})$ is a Hom-algebra. 
Moreover, if $(\mathscr{A}, \cdot, \alpha, \{R_\lambda, \lambda \in \Omega\})$ is a Rota-Baxter family Hom-algebra of weight $0$ then, 
$(\mathscr{A} \otimes \mathbb{K} \Omega, \bullet, \tilde{\alpha}, R)$ is a Rota-Baxter Hom-algebra of weight $0$, 
where $R : \mathscr{A} \otimes \mathbb{K} \Omega \to \mathscr{A}\otimes \mathbb{K} \Omega$, 
$x \otimes \lambda \mapsto R_\lambda(x) \otimes \lambda$.
\begin{proposition}
A Rota-Baxter family $\{R_\lambda \mid \lambda \in \Omega\}$ of weight $0$ on a Hom-anti-associative algebra $(\mathscr{A}, *, \alpha)$ induces a Hom-rhizaform family algebra 
\[
(\mathscr{A}, *, \alpha, \{\prec_\lambda, \succ_\lambda \mid \lambda \in \Omega\})
\]
where the operations $\prec_\lambda$ and $\succ_\lambda$ are defined by
\begin{align}
a \prec_\lambda b &= a * R_\lambda(b), \\
a \succ_\lambda b &= R_\lambda(a) * b,
\end{align}
for all $\lambda \in \Omega$ and $a, b \in \mathscr{A}$.
\end{proposition}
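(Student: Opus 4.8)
The plan is to verify, one at a time, the four defining identities of a Hom-rhizaform family algebra for the prescribed operations $a\prec_\lambda b = a\ast R_\lambda(b)$ and $a\succ_\lambda b = R_\lambda(a)\ast b$. The only ingredients needed are: multiplicativity of $\alpha$ on $(\mathscr{A},\ast,\alpha)$, the commutation relation $\alpha\circ R_\lambda = R_\lambda\circ\alpha$ from \eqref{eq26}, the Hom-anti-associativity $\alpha(x)\ast(y\ast z) = -(x\ast y)\ast\alpha(z)$, and the Rota-Baxter family relation $R_\lambda(a)\ast R_\omega(b) = R_{\lambda\omega}\bigl(R_\lambda(a)\ast b + a\ast R_\omega(b)\bigr)$. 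I would state at the outset that $\alpha$ is taken to be multiplicative, since that standing hypothesis is genuinely used in the first identity.

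The compatibility of $\prec_\omega$ and $\succ_\omega$ with $\alpha$ is immediate: using multiplicativity of $\alpha$ and $\alpha R_\omega = R_\omega\alpha$,
\[
\alpha(a\prec_\omega b) = \alpha(a\ast R_\omega(b)) = \alpha(a)\ast\alpha(R_\omega(b)) = \alpha(a)\ast R_\omega(\alpha(b)) = \alpha(a)\prec_\omega\alpha(b),
\]
and the same computation gives $\alpha\circ\succ_\omega = \succ_\omega\circ\alpha^{\otimes 2}$.

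For the three splitting identities the recipe is uniform: expand both sides in terms of $\ast$ and the operators $R_\gamma$, apply Hom-anti-associativity to relocate the $\alpha$, and insert the Rota-Baxter family relation whenever a product $R_\lambda(\cdot)\ast R_\omega(\cdot)$ (or the image of such a sum under a single $R_{\lambda\omega}$) appears. For instance, for the last axiom,
\begin{align*}
(x\prec_\omega y + x\succ_\lambda y)\succ_{\lambda\cdot\omega}\alpha(z)
&= R_{\lambda\omega}\bigl(x\ast R_\omega(y) + R_\lambda(x)\ast y\bigr)\ast\alpha(z)\\
&= \bigl(R_\lambda(x)\ast R_\omega(y)\bigr)\ast\alpha(z)\\
&= -\alpha(R_\lambda(x))\ast\bigl(R_\omega(y)\ast z\bigr)\\
&= -R_\lambda(\alpha(x))\ast(y\succ_\omega z) = -\alpha(x)\succ_\lambda(y\succ_\omega z),
\end{align*}
where the second equality is the Rota-Baxter family relation and the third is Hom-anti-associativity. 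The identity for $(x\succ_\lambda y)\prec_\omega\alpha(z)$ comes out the same way using only Hom-anti-associativity and $\alpha R_\lambda = R_\lambda\alpha$ (no Rota-Baxter relation is required there), while the identity for $(x\prec_\lambda y)\prec_\omega\alpha(z)$ is handled by first using Hom-anti-associativity to reach $-\alpha(x)\ast\bigl(R_\lambda(y)\ast R_\omega(z)\bigr)$ and then rewriting the inner product $R_\lambda(y)\ast R_\omega(z)$ via the Rota-Baxter family relation, which produces exactly $-\alpha(x)\prec_{\lambda\cdot\omega}(y\succ_\lambda z + y\prec_\omega z)$.

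None of these steps is deep; the calculations are routine. The one place that needs genuine care — and the only real ``obstacle'' — is the bookkeeping of the semigroup parameters: one must check at each step that the weight $\lambda\omega$ produced by the Rota-Baxter family relation matches precisely the subscript demanded by the corresponding Hom-rhizaform family axiom, which is where the multiplication of $\Omega$ enters. Once the indices are tracked correctly, the proof is a direct substitution.
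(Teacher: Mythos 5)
Your proof is correct and follows essentially the same route as the paper's: a direct verification of the $\alpha$-compatibility and of the three splitting identities by expanding in terms of $\ast$, using $\alpha\circ R_\gamma = R_\gamma\circ\alpha$, Hom-anti-associativity, and the Rota-Baxter family relation, with the same bookkeeping of the semigroup indices. Your explicit flagging of the multiplicativity of $\alpha$ (which the paper invokes tacitly) and your correct sign $-\alpha(x)\succ_\lambda(y\prec_\omega z)$ in the identity for $(x\succ_\lambda y)\prec_\omega\alpha(z)$ (where the paper's displayed computation drops a minus sign at the last step) are minor touch-ups, not a different approach.
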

\begin{proof}
  First, the multiplicativity of $\alpha$ with respect to $\ast$ and Condition \ref{eq26} imply
 the multiplicativity of$\alpha$ with respect to $\prec_\lambda$ and $\succ_\lambda$ for any $\lambda \in \Omega$. Next, for all $x,\ y, \ z \in \mathscr{A}$ and $\lambda,\omega \in \Omega$,  we compute  
 \begin{align*}
(x \prec_{\lambda} y) \prec_{\omega} \alpha(z) &= (x \cdot R_{\lambda}(y) ) \prec_{\omega} \alpha(z) \\
&= (x \cdot R_{\lambda}(y) ) \cdot R_{\omega}(\alpha(z)) \\
&= -\alpha(x) \cdot (R_{\lambda}(y) \cdot R_{\omega}(z))\\
&= -\alpha(x) \cdot R_{\lambda\omega}(y \cdot R_{\omega}(z) + R_{\lambda}(y) \cdot z)  \\
&=- \alpha(x) \prec_{\lambda\omega} (y \prec_{\omega} z + y \succ_{\lambda} z).
\end{align*}
\text{Similarly, we compute}
\begin{align*}
(x \succ_{\lambda} y) \prec_{\omega} \alpha(z) &= (R_{\lambda}(x) \cdot y) \prec_{\omega} \alpha(z) \\
&= (R_{\lambda}(x) \cdot y) \cdot R_{\omega}(\alpha(z))  \\
&= -R_{\lambda}(\alpha(x)) \cdot (y \cdot R_{\omega}(z)  \\
&= -R_{\lambda}(\alpha(x)) \cdot (y \prec_{\omega} z) = \alpha(x) \succ_{\lambda} (y \prec_{\omega} z).
\end{align*}
\text{ Finally we get}
\begin{align*}
\alpha(x) \succ_{\lambda} (y \succ_{\omega} z) &= \alpha(x) \succ_{\lambda} (R_{\omega}(y) \cdot z) = R_{\lambda}(\alpha(x)) \cdot (R_{\omega}(y) \cdot z) \\
&= -(R_{\lambda}(x) \cdot R_{\omega}(y)) \cdot \alpha(z) =- R_{\lambda\omega}(R_{\lambda}(x) \cdot y + x \cdot R_{\omega}(y))\cdot\alpha(z) \\
&= -(R_{\lambda}(x) \cdot y + x \cdot R_{\omega}(y))\succ_{\lambda\omega} \alpha(z) \\
&= -(x \succ_{\lambda} y + x \prec_{\omega} y) \succ_{\lambda\omega} \alpha(z).
\end{align*}
This ends the proof.
\end{proof}
\section{Classification}
This section describes the classification of  Hom-Rhizaform algebras  of dimension $\leq 3$ over the field $\mathbb{F}$ of characteristic $0$.
The sets of all multiplicative  and no multiplicative Hom-Rhizaform algebras of are denoted by $\mathscr{A}_p^q(m)$ and
$\mathscr{A}_p^q(nm)$ respectively.

Let $(\mathscr{A},\succ,\prec,\alpha,\beta)$ be an n-dimensional Hom-Rhizaform algebra, ${ei}$ be a basis of $\mathscr{A}$. For any $i, j \in\mathbb{N}, 1 \leq i, j \leq n$, let us put
$$e_i\succ e_j=\sum_{k=1}^n\gamma_{ij}^ke_k\quad ;\quad e_i\prec e_j=\sum_{k=1}^n\delta_{ij}^ke_k\quad ;\quad  \alpha(e_i)=\sum_{k=1}^n\alpha_{ji}e_j.$$
The axioms in Definition \ref{def0} are respectively equivalent to
$$\left\{\begin{array}{c}
\begin{array}{ll}
\gamma_{ij}^k\alpha_{qk}-\alpha_{ki}\alpha_{jp}\gamma_{kp}^q=0,\\
\delta_{ij}^k\alpha_{qk}-\alpha_{ki}\alpha_{jp}\delta_{kp}^q=0,
\end{array}
\end{array}\right.$$

$$\left\{\begin{array}{c}
\begin{array}{ll}
\delta_{ij}^p\alpha_{qk}\delta_{pq}^r+\gamma_{ij}^p\alpha_{qk}\gamma_{pq}^r+\alpha_{pi}\gamma_{jk}^q\delta_{pq}^r=0,\\
\alpha_{pi}\delta_{jk}^q\gamma_{pq}^r+\alpha_{pi}\gamma_{jk}^p\gamma_{pq}^r+\gamma_{ij}^p\alpha_{qk}\gamma_{pq}^r=0,\\
\alpha_{pi}\gamma_{jk}^q\delta_{pq}^r+\delta_{ij}^p\alpha_{qk}\gamma_{pq}^r=0.
\end{array}
\end{array}\right.$$

\begin{thm}
The isomorphism class of 2-dimensional Hom-Rhizaform algebras is given by the following representatives:
\begin{itemize}
\item
$
\mathscr{A}_1(m):\left\{
    \begin{array}{ll}
      e_2\succ e_2=e_1&\\
     e_2\prec e_2=e_1&\\
    \end{array}
\right.\quad
\begin{array}{ll}
      \alpha(e_1)=e_1&\\
     \alpha(e_2)=e_1+e_2&
    \end{array}$
\item
$\mathscr{A}_2(m):\left\{
    \begin{array}{ll}
      e_1\succ e_2=e_1& e_1\prec e_2=e_1\\
     e_2\succ e_1=e_1&\\
      e_2\succ e_2=e_1&
    \end{array}
\right.\quad
\begin{array}{ll}
      \alpha(e_2)=e_1.&
    \end{array}$

\item
$\mathscr{A}_3(m):\left\{
    \begin{array}{ll}
      e_1\succ e_2=e_1& e_1\prec e_2=e_1\\
     e_2\succ e_1=e_1&e_2\prec e_1=e_1\\
      e_2\succ e_2=e_1&
    \end{array}
\right.\quad
\begin{array}{ll}
      \alpha(e_2)=e_1.&
    \end{array}$
    
 \item
$\mathscr{A}_4(nm):\left\{
    \begin{array}{ll}
      e_1\succ e_2=e_1& e_2\prec e_1=e_1\\
     e_2\succ e_1=e_1&e_2\prec e_1=e_1\\
      e_2\succ e_2=e_1&e_2\prec e_2=e_1
    \end{array}
\right.\quad
\begin{array}{ll}
      \alpha(e_2)=e_1.&
    \end{array}$   
\item
$\mathscr{A}_5(nm):\left\{
    \begin{array}{ll}
      e_1\succ e_1=e_2& e_1\prec e_1=e_2\\
			e_1\succ e_2=e_2&e_1\prec e_2=e_2\\
			e_2\succ e_1=e_2&e_2\prec e_1=e_2
    \end{array}
\right.\quad
\begin{array}{ll}
    \alpha(e_2)=e_2.&
    \end{array}$   

\item
$\mathscr{A}_6(nm):\left\{
    \begin{array}{ll}
     e_1\succ e_2=e_1& e_1\prec e_2=e_1\\
			e_2\succ e_1=e_1&e_2\prec e_1=e_1\\
			e_2\succ e_2=e_1&e_2\prec e_2=e_1
    \end{array}
\right.\quad
\begin{array}{ll}
      \alpha(e_1)=e_1&
    \end{array}$ 
						
\item
$\mathscr{A}_7(m):\left\{
    \begin{array}{ll}
      e_1\succ e_1=e_2&\\
     e_1\prec e_1=e_2&
    \end{array}
\right.\quad
\begin{array}{ll}
      \alpha(e_1)=e_1&\\
    \alpha(e_2)=e_2.&
    \end{array}$ 		      

\end{itemize}
\end{thm}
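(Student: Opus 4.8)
The plan is to carry out the classification by the standard method for low-dimensional algebraic structures: write down the generic structure constants, impose the defining identities to obtain a polynomial system, solve it, and then reduce modulo change of basis. Fix a $2$-dimensional space $\mathscr{A}$ with basis $\{e_1,e_2\}$ and set
\[
e_i \succ e_j = \sum_{k=1}^{2} \gamma_{ij}^{k} e_k, \qquad
e_i \prec e_j = \sum_{k=1}^{2} \delta_{ij}^{k} e_k, \qquad
\alpha(e_i) = \sum_{k=1}^{2} \alpha_{ki} e_k .
\]
Substituting these into the identities of Definition~\ref{def0} — equivalently, into the polynomial system displayed just above — yields a finite collection of polynomial equations in the $\gamma_{ij}^k$, $\delta_{ij}^k$ and $\alpha_{ki}$. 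The first task is to solve this system completely, producing the full list (finite up to scaling) of Hom-rhizaform multiplication tables on $\mathscr{A}$.

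Next I would pass to isomorphism classes. A morphism is an invertible $P \in GL_2(\mathbb{F})$ with $P\alpha = \alpha' P$, $P(x \succ y) = (Px)\succ'(Py)$ and $P(x\prec y)=(Px)\prec'(Py)$. Since the Jordan type of $\alpha$ is an isomorphism invariant, I would split the analysis accordingly: (i) $\alpha = 0$; (ii) $\alpha$ of rank one, subdivided into the nilpotent case and the case of distinct eigenvalues $0$ and $c\neq 0$ (normalizable to $0,1$); (iii) $\alpha$ invertible. In each case the residual basis freedom is the centralizer in $GL_2(\mathbb{F})$ of the chosen normal form of $\alpha$, and I would use it to bring the surviving structure constants to canonical form. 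The multiplicativity conditions $\alpha\circ\succ=\succ\circ\alpha^{\otimes 2}$ and $\alpha\circ\prec=\prec\circ\alpha^{\otimes 2}$ are strongly restrictive and discard most tentative solutions; the labels $(m)$ and $(nm)$ record, respectively, whether these hold for the representative in question. Collecting the canonical forms produced this way should yield exactly $\mathscr{A}_1,\dots,\mathscr{A}_7$.

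Finally I would check that the seven algebras are pairwise non-isomorphic by exhibiting, for each pair, a discriminating invariant: the rank and Jordan type of $\alpha$; the dimensions of $\mathscr{A}\succ\mathscr{A}$, $\mathscr{A}\prec\mathscr{A}$, and $\mathscr{A}\diamond\mathscr{A}$; the dimension of the (left, right, two-sided) annihilator; whether the identity $x\succ y + x\prec y = 0$ holds for all $x,y$; and the nilpotency class in the sense of Section~5. Since distinct entries on the list differ in at least one of these, the classification has no redundancies.

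The main obstacle will be the bookkeeping. The raw solution set of the polynomial system is sizable, and the delicate points are (a) making sure no solution branch is overlooked when solving the equations, and (b) in the reduction step, neither merging two genuinely inequivalent orbits nor missing an identification produced by a non-obvious element of the centralizer. Organizing everything first by the Jordan type of $\alpha$ and then leaning on the multiplicativity constraints is what keeps the casework finite and tractable; the part that genuinely requires care is establishing \emph{completeness}, i.e.\ that every solution is equivalent to one of the seven listed forms.
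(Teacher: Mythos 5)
Your proposal is a plan for a classification, not a classification: every step that carries actual mathematical content is deferred. You never write down, let alone solve, the polynomial system in the $\gamma_{ij}^k$, $\delta_{ij}^k$, $\alpha_{ki}$ for $n=2$; you never perform a single change-of-basis reduction; you never verify that the seven listed multiplication tables actually satisfy the identities of Definition~\ref{def0}; and the decisive claim --- that ``collecting the canonical forms produced this way should yield exactly $\mathscr{A}_1,\dots,\mathscr{A}_7$'' --- is exactly the statement to be proved, which you yourself flag as the part ``that genuinely requires care.'' A proof of a classification theorem consists precisely of the exhaustive case analysis (organized, as you suggest, by the Jordan type of $\alpha$ and the residual centralizer action), the explicit isomorphisms collapsing equivalent solutions, and the invariants separating the survivors; none of this is supplied, so there is nothing here that could be checked for correctness or completeness. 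Note also that your parenthetical ``finite up to scaling'' is unjustified: solution varieties of such quadratic systems are typically positive-dimensional (the paper's own $3$-dimensional list contains a parameter $\eta$), so finiteness of the orbit list is a conclusion of the reduction, not an input to it. For what it is worth, the paper itself offers no proof either --- only the structure-constant setup and the final lists --- so the setup you describe coincides with what is printed, but it does not amount to a proof in either place.

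A second, more conceptual problem is your handling of the multiplicativity conditions $\alpha\circ\succ=\succ\circ\alpha^{\otimes 2}$ and $\alpha\circ\prec=\prec\circ\alpha^{\otimes 2}$. In Definition~\ref{def0} these are part of the axioms, yet the theorem's list contains entries labelled $(nm)$, i.e.\ non-multiplicative ones, so the classification being asserted is of structures satisfying only the three rhizaform identities, with multiplicativity recorded afterwards as a dichotomy. Your strategy of using multiplicativity ``to discard most tentative solutions'' would therefore eliminate $\mathscr{A}_4,\mathscr{A}_5,\mathscr{A}_6$ and several of the $3$-dimensional representatives, and would prove a different (strictly smaller) statement than the one at hand. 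You would need to separate the two regimes cleanly --- solve the rhizaform identities first, classify up to the full $GL_2(\mathbb{F})$ action intertwining $\alpha$, and only then test multiplicativity on representatives --- and carry that computation out in full before this can be regarded as a proof.
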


\begin{thm}
The isomorphism class of 3-dimensional Hom-Rhizaform algebras is given by the following representatives:
\begin{itemize}
\item
$
\mathscr{A}_1(m):\left\{
    \begin{array}{ll}
      e_1\succ e_2=e_1+e_3&  e_2\prec e_1=e_1+\frac{3}{2}e_3\\
    e_2\succ e_2=e_1+e_3&  e_2\prec e_2=e_1+e_3\\
    e_2\succ e_3=e_1+e_3&  e_2\prec e_3=e_1+e_3\\
    e_3\succ e_2=e_1+e_3&  e_3\prec e_2=e_1+e_3\\
    \end{array}
\right.\quad
\begin{array}{ll}
      \alpha(e_2)=e_1&\\
     \alpha(e_3)=e_2.&
    \end{array}$
\item    
$
\mathscr{A}_2(m):\left\{
    \begin{array}{ll}
      e_1\succ e_2=e_1&  e_2\prec e_1=e_1\\
    e_2\succ e_1=e_1&  e_2\prec e_2=e_1\\
    e_2\succ e_3=e_1&  e_2\prec e_3=e_1\\
    e_3\succ e_2=e_1&  e_3\prec e_2=e_1\\
    \end{array}
\right.\quad
\begin{array}{ll}
      \alpha(e_2)=e_1&\\
     \alpha(e_3)=e_3.&
    \end{array}$
\item    
$
\mathscr{A}_3(nm):\left\{
    \begin{array}{ll}
      e_1\succ e_1=e_2&  e_1\prec e_3=e_2\\
    e_2\succ e_2=e_2&  e_2\prec e_2=e_2\\
    e_3\succ e_1=e_2&  e_3\prec e_2=e_2\\
    e_3\succ e_3=e_2&  e_3\prec e_3=e_2\\
    \end{array}
\right.\quad
\begin{array}{ll}
      \alpha(e_2)=e_1&\\
     \alpha(e_3)=e_3.&
    \end{array}$

 \item    
$
\mathscr{A}_4(nm):\left\{
    \begin{array}{ll}
      e_1\succ e_1=e_3&  e_1\prec e_1=e_3\\
    e_1\succ e_2=e_3&  e_1\prec e_2=\eta e_3\\
    e_2\succ e_1=e_3&  e_2\prec e_2=e_2\\
    e_2\succ e_2=\frac{1}{4}e_3&  e_3\prec e_2=e_3\\
    \end{array}
\right.\quad
\begin{array}{ll}
      \alpha(e_2)=e_1&\\
     \alpha(e_3)=e_3.&
    \end{array}$ 

  \item    
$
\mathscr{A}_5(nm):\left\{
    \begin{array}{ll}
      e_1\succ e_2=e_3&  e_1\prec e_1=e_3\\
    e_2\succ e_1=e_3&  e_1\prec e_2=e_3\\
    e_2\succ e_2=e_3&  e_2\prec e_2=e_2\\
    e_3\succ e_3=e_3&  e_3\prec e_2=e_3\\
    \end{array}
\right.\quad
\begin{array}{ll}
      \alpha(e_1)=e_1&\\
     \alpha(e_2)=e_1+e_2.&
    \end{array}$  
  \item    
$
\mathscr{A}_6(nm):\left\{
    \begin{array}{ll}
      e_1\succ e_3=e_1+e_2&  e_1\prec e_3=e_1+e_2\\
    e_2\succ e_3=e_1+e_2&  e_2\prec e_3=e_1+e_2\\
    e_3\succ e_1=e_1-e_2&  e_3\prec e_1=e_1+e_2\\
    e_3\succ e_2=e_1+e_2&  e_3\prec e_2=e_1+e_2\\
    e_3\succ e_3=e_2&  e_3\prec e_3=\eta e_1+e_2\\
    \end{array}
\right.\quad
\begin{array}{ll}
      \alpha(e_1)=e_1&\\
     \alpha(e_2)=e_1+e_2.&
    \end{array}$ 
 \item    
$
\mathscr{A}_7 (nm):\left\{
    \begin{array}{ll}
      e_1\succ e_1=e_3&  e_1\prec e_1=e_3\\
    e_1\succ e_2=e_3&  e_1\prec e_2=e_3\\
    e_2\succ e_1=e_3&  e_2\prec e_1=e_3\\
    e_2\succ e_2=-e_3&  e_2\prec e_2=\eta e_3\\
    \end{array}
\right.\quad
\begin{array}{ll}
      \alpha(e_1)=e_1\\
     \alpha(e_2)=e_1+e_2\\
      \alpha(e_3)=e_3.
    \end{array}$    
\item    
$
\mathscr{A}_8 (nm):\left\{
    \begin{array}{ll}
      e_1\succ e_3=e_2&  e_1\prec e_3=e_2\\
    e_3\succ e_1=-e_2&  e_3\prec e_1=e_2\\
    e_3\succ e_3=e_2&  e_3\prec e_3=\frac{1}{4}e_2\\
    \end{array}
\right.\quad
\begin{array}{ll}
      \alpha(e_1)=e_1\\
     \alpha(e_2)=e_1+e_2\\
      \alpha(e_3)=e_2+e_3.
    \end{array}$    
    
\item    
$
\mathscr{A}_9 (nm):\left\{
    \begin{array}{ll}
      e_1\succ e_1=e_2&  e_1\prec e_3=-e_2\\
    e_1\succ e_3=e_2&  e_2\prec e_3=e_2\\
		 e_2\succ e_3=e_2&  e_3\prec e_1=e_2\\
		 e_3\succ e_2=-e_2&  e_3\prec e_2=e_2\\
    e_3\succ e_3=\eta e_2&  e_3\prec e_3=\frac{1}{2}e_2\\
    \end{array}
\right.\quad
\begin{array}{ll}
      \alpha(e_1)=e_1\\
     \alpha(e_2)=e_2.
    \end{array}$    
		
\item    
$
\mathscr{A}_{10} (nm):\left\{
    \begin{array}{ll}
      e_1\succ e_2=e_3&  e_1\prec e_2=e_2\\
    e_2\succ e_1=-e_3&  e_2\prec e_1=e_3\\
		 e_2\succ e_2=e_3&  e_2\prec e_2=-2e_3\\
		 e_3\succ e_3=e_3&  e_3\prec e_3=e_3\\
    \end{array}
\right.\quad
\begin{array}{ll}
      \alpha(e_1)=e_1\\
     \alpha(e_2)=e_2.
    \end{array}$    		
    		
\item    
$
\mathscr{A}_{11} (nm):\left\{
    \begin{array}{ll}
      e_1\succ e_2=e_1&  e_1\prec e_2=e_1\\
			 e_2\succ e_1=e_1&  e_2\prec e_1=e_1\\
			 e_2\succ e_2=e_1&  e_2\prec e_2=e_1\\
    e_2\succ e_3=e_1&  e_3\prec e_2=e_1\\
		 e_3\succ e_2=e_1&  e_3\prec e_3=e_1\\
    \end{array}
\right.\quad
\begin{array}{ll}
      \alpha(e_1)=e_1\\
     \alpha(e_3)=e_3.
    \end{array}$ 
					
\item    
$
\mathscr{A}_{12} (nm):\left\{
    \begin{array}{ll}
      e_1\succ e_1=e_2&  e_1\prec e_1=e_2\\
			 e_1\succ e_3=e_2&  e_1\prec e_3=e_2\\
			 e_2\succ e_2=e_2&  e_2\prec e_2=e_2\\
    e_3\succ e_1=e_2&  e_3\prec e_1=e_2\\
		 e_3\succ e_3=e_2&  e_3\prec e_3=-e_2\\
    \end{array}
\right.\quad
\begin{array}{ll}
      \alpha(e_1)=e_1\\
     \alpha(e_3)=e_3.
    \end{array}$ 					
    						
\item    
$
\mathscr{A}_{13} (nm):\left\{
    \begin{array}{ll}
      e_1\succ e_1=e_1+e_3&  e_1\prec e_1=e_1+e_3\\
			 e_2\succ e_2=e_1+e_3&  e_2\prec e_2=e_1-e_3\\
    \end{array}
\right.\quad
\begin{array}{ll}
      \alpha(e_2)=e_2\\
     \alpha(e_3)=e_3.
    \end{array}$ 
							
\item    
$
\mathscr{A}_{14} (nm):\left\{
    \begin{array}{ll}
      e_1\succ e_1=e_1+e_3&  e_1\prec e_1=e_1+e_3\\
			 e_2\succ e_2=e_2&  e_2\prec e_2=e_2\\
    \end{array}
\right.\quad
\begin{array}{ll}
      \alpha(e_2)=e_2\\
     \alpha(e_3)=e_3.
    \end{array}$ 		
							
\item    
$
\mathscr{A}_{15} (nm):\left\{
    \begin{array}{ll}
      e_1\succ e_1=e_2&  e_1\prec e_1=e_2\\
		e_3\succ e_1=-2e_2&  e_1\prec e_3=e_2\\
		e_3\succ e_3=e_2&  e_3\prec e_3=e_2\\
    \end{array}
\right.\quad
\begin{array}{ll}
      \alpha(e_1)=e_1\\
			\alpha(e_2)=-2e_2\\
     \alpha(e_3)=e_3.
    \end{array}$ 

\item    
$
\mathscr{A}_{16} (m):\left\{
    \begin{array}{ll}
      e_1\succ e_1=e_2&  e_1\prec e_1=e_2\\
			  e_1\succ e_3=e_2&  e_1\prec e_3=e_2\\
		e_3\succ e_1=e_2&  e_3\prec e_1=e_2\\
		e_3\succ e_3=e_2&  e_3\prec e_3=e_2\\
    \end{array}
\right.\quad
\begin{array}{ll}
      \alpha(e_1)=e_1\\
			\alpha(e_2)=e_2\\
     \alpha(e_3)=e_3.
    \end{array}$ 											
\end{itemize}
\end{thm}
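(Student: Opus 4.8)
The plan is to reduce the defining identities of Definition~\ref{def0} to a polynomial system in the structure constants and then to solve it modulo isomorphism. Fix a basis $(e_1,e_2,e_3)$ and set $e_i\succ e_j=\sum_k\gamma_{ij}^k e_k$, $e_i\prec e_j=\sum_k\delta_{ij}^k e_k$, $\alpha(e_i)=\sum_k\alpha_{ki}e_k$, as displayed just before the statement; the multiplicativity conditions together with the three rhizaform axioms \eqref{req1}, \eqref{req2}, \eqref{eq8} are then equivalent to the two systems of quadratic and cubic equations already recorded. So the first step is bookkeeping: write these equations out explicitly in dimension $3$.

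The second step is to stratify by the conjugacy class of $\alpha$. Since $\mathbb{F}$ has characteristic $0$ one may base-change to the algebraic closure and put $\alpha$ into Jordan normal form; in dimension $3$ there are only finitely many types (three, two or one distinct eigenvalues in the semisimple case; a $2{+}1$ Jordan decomposition; a single $3{\times}3$ block; plus the degenerate subcases in which some eigenvalues vanish). For a fixed $\alpha$ the relations $\alpha\circ\succ=\succ\circ\alpha^{\otimes2}$ and $\alpha\circ\prec=\prec\circ\alpha^{\otimes2}$ force $e_i\succ e_j$ and $e_i\prec e_j$ to lie in prescribed generalized eigenspaces, which eliminates most of the free structure constants; feeding what remains into \eqref{req1}, \eqref{req2}, \eqref{eq8} leaves a small polynomial system whose solution set is a finite union of parametrized families. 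Each such system can be solved directly, and a Gr\"obner-basis computation makes this exhaustive and rigorous.

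The third step is the reduction modulo isomorphism. By the definition of homomorphism of Hom-rhizaform algebras, two of these algebras are isomorphic precisely when there is $g\in\mathrm{GL}_3(\mathbb{F})$ with $\alpha_2 g=g\alpha_1$ intertwining the pairs $(\succ_1,\prec_1)$ and $(\succ_2,\prec_2)$; for $\alpha_1=\alpha_2$ fixed in Jordan form this means $g$ ranges over the centralizer of $\alpha$ in $\mathrm{GL}_3(\mathbb{F})$. Using this residual freedom to rescale basis vectors, clear inessential summands and normalize the surviving parameters (the $\eta$ appearing in some families), one arrives at the representatives $\mathscr{A}_1,\dots,\mathscr{A}_{16}$, with $(m)$/$(nm)$ flagging whether or not $\alpha$ is multiplicative. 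Finally one checks the list is irredundant by exhibiting separating invariants: the Jordan type and rank of $\alpha$, the dimensions of $\mathscr{A}\succ\mathscr{A}$, $\mathscr{A}\prec\mathscr{A}$ and $\mathscr{A}\diamond\mathscr{A}$, the nilpotency indices discussed earlier, and, where necessary, the orbit of the parameter $\eta$ under the centralizer action.

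The hard part is not any individual computation but the combinatorics of the case analysis: guaranteeing that every Jordan type of $\alpha$ and every branch of every polynomial system has been pursued to the end (completeness) and that the centralizer action has been fully used so that no two listed algebras coincide up to isomorphism (irredundancy). The non-semisimple and repeated-eigenvalue cases for $\alpha$ produce the bulk of the subcases and are where an omission or a redundancy is most likely.
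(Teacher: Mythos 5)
Your strategy coincides with what the paper itself does: translate the axioms of Definition~\ref{def0} into the two systems of polynomial equations in the structure constants $\gamma_{ij}^k$, $\delta_{ij}^k$, $\alpha_{ki}$ (exactly the systems displayed before the theorems) and then solve them up to base change. The extra organization you propose, stratifying by the Jordan type of $\alpha$ so that the multiplicativity relations $\alpha\circ\succ=\succ\circ\alpha^{\otimes 2}$, $\alpha\circ\prec=\prec\circ\alpha^{\otimes 2}$ constrain the products into generalized eigenspaces, and then reducing by the centralizer of $\alpha$ in $\mathrm{GL}_3(\mathbb{F})$, is a sensible and in fact sharper way to set up the computation than anything the paper makes explicit.

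The genuine gap is that the proposal stops exactly where the content of the theorem begins. For a classification statement the proof \emph{is} the exhaustive case analysis: one must actually run through every Jordan type of $\alpha$ (including the non-invertible and non-multiplicative cases, which is what the $(m)$/$(nm)$ labels record), solve each resulting system, normalize the residual parameters, and verify pairwise non-isomorphism. None of this is carried out, so completeness and irredundancy, which you yourself identify as the hard part, are asserted rather than established. Two concrete symptoms: several listed families ($\mathscr{A}_4$, $\mathscr{A}_6$, $\mathscr{A}_7$, $\mathscr{A}_9$) still carry an unnormalized parameter $\eta$, so your step of ``normalizing the surviving parameters'' would either eliminate $\eta$, show it is a genuine modulus (in which case these are one-parameter families, not single isomorphism classes), or determine its orbit under the centralizer action, and the proposal does not decide which; and the claimed separating invariants (rank and Jordan type of $\alpha$, dimensions of $\mathscr{A}\succ\mathscr{A}$, $\mathscr{A}\prec\mathscr{A}$, $\mathscr{A}\diamond\mathscr{A}$) are not evaluated on the sixteen representatives, so irredundancy is not checked even in outline. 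As it stands the proposal is a correct proof scheme, matching the paper's implicit method, but it does not verify the stated list.
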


\begin{thm}
The Connes cocycle of 2-dimensional Hom-Rhizaform algebras is given by the following representatives:

$
\mathscr{A}_{1}:
    \begin{array}{ll}
     \omega(e_2,e_1)=c_{21}^1e_1\\
		  \omega(e_2,e_2)=c_{22}^1e_1+c_{21}^1e_2
    \end{array}$ ;
$
\mathscr{A}_{2}:
    \begin{array}{ll}
     \omega(e_1,e_2)=c_{12}^1e_1\\
		    \omega(e_2,e_1)=c_{21}^1e_1\\
		  \omega(e_2,e_2)=c_{22}^1e_1
    \end{array}$ ;
$
\mathscr{A}_{3}:
    \begin{array}{ll}
     \omega(e_1,e_2)=c_{12}^1e_1&\\
		    \omega(e_2,e_1)=c_{21}^1e_1&\\
		  \omega(e_2,e_2)=c_{22}^1e_1&
    \end{array}$	;
$
\mathscr{A}_{4}:
    \begin{array}{ll}
     \omega(e_1,e_2)=c_{12}^1e_1&\\
		    \omega(e_2,e_1)=c_{21}^1e_1&\\
		  \omega(e_2,e_2)=c_{22}^1e_1&
    \end{array}$		;		
	
	$
\mathscr{A}_{5}:
    \begin{array}{ll}
     \omega(e_1,e_1)=c_{11}^1e_1&\\
		 \omega(e_1,e_2)=c_{12}^1e_1&\\
		  \omega(e_2,e_1)=c_{21}^1e_1&
    \end{array}$ ;
$
\mathscr{A}_{6}:
    \begin{array}{ll}
     \omega(e_1,e_2)=c_{12}^2e_2&\\
		    \omega(e_2,e_1)=c_{21}^2e_2&\\
		  \omega(e_2,e_2)=c_{22}^2e_2&
    \end{array}$ ;
$
\mathscr{A}_{7}:
    \begin{array}{ll}
     \omega(e_1,e_1)=c_{11}^1e_1+c_{11}^2e_2&\\
		    \omega(e_1,e_2)=c_{12}^1e_1+c_{12}^2e_2&
    \end{array}$.		
\end{thm}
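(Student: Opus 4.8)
The plan is to carry out a direct case-by-case computation over the seven isomorphism classes $\mathscr{A}_1,\dots,\mathscr{A}_7$ of $2$-dimensional Hom-Rhizaform algebras produced by the classification theorem. For each one I pass to the associated Hom-anti-associative algebra $(\mathscr{A},\ast,\alpha)$, where $x\ast y=x\succ y+x\prec y$, and I determine all Connes cocycles $\omega$ on it. Writing $\omega(e_p,e_q)=\sum_{k}c^{\,k}_{pq}\,e_k$ in the given basis $(e_1,e_2)$, a Connes cocycle is exactly a solution of a homogeneous linear system in the unknowns $c^{\,k}_{pq}$, and the content of the theorem is that this solution space is the one displayed.

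The equations split into two blocks. The first comes from the invariance condition $\alpha\circ\omega=\omega\circ\alpha^{\otimes 2}$: for each pair $(p,q)$ I expand $\alpha\big(\omega(e_p,e_q)\big)=\omega(\alpha(e_p),\alpha(e_q))$ using the explicit map $\alpha$ read off from the classification and compare the coefficients of $e_1$ and $e_2$. The second comes from the cocycle identity
\[
\omega(e_p\ast e_q,\alpha(e_r))+\omega(e_q\ast e_r,\alpha(e_p))+\omega(e_r\ast e_p,\alpha(e_q))=0,
\]
imposed for every triple of basis vectors $(e_p,e_q,e_r)$ and expanded by bilinearity from the multiplication table of $\ast$. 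In each of the seven algebras the products land in a one-dimensional subspace, so most terms vanish and the systems are small; solving them leaves precisely the free parameters of the statement (after relabelling $c^{\,k}_{ij}$). Substituting each displayed $\omega$ back into the two conditions then verifies that these families are in fact the full spaces of Connes cocycles.

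The one point that needs care is that $\alpha$ is not the identity in most of the listed algebras --- for instance $\alpha(e_2)=e_1+e_2$ for $\mathscr{A}_1$ and $\alpha(e_2)=e_1$ for $\mathscr{A}_2$ --- so both $\omega(e_p\ast e_q,\alpha(e_r))$ and $\omega(\alpha(e_p),\alpha(e_q))$ genuinely mix several basis contributions, and one must substitute the exact structure constants rather than argue schematically; a single index or sign slip contaminates the whole linear system. I expect this bookkeeping, together with running through all basis triples in all seven cases, to be the only real obstacle; once the systems are written down correctly they are elementary to solve and force the answer to be exactly the seven families listed.
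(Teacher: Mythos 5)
The paper itself states this theorem with no proof at all (only the general structure-constant systems at the head of the classification section are given as implicit machinery), so there is no argument of the authors to compare against line by line. Your route --- pass to the associated product $\ast$, write $\omega(e_p,e_q)=\sum_k c^{k}_{pq}e_k$, impose the cyclic identity $\omega(e_p\ast e_q,\alpha(e_r))+\omega(e_q\ast e_r,\alpha(e_p))+\omega(e_r\ast e_p,\alpha(e_q))=0$ on all basis triples together with $\alpha\circ\omega=\omega\circ\alpha^{\otimes 2}$, and solve the resulting homogeneous linear system separately for each of the seven algebras --- is the correct and essentially the only available method, and it does reproduce the table: for instance, in $\mathscr{A}_1$ the triples $(e_2,e_2,e_1)$ and $(e_2,e_2,e_2)$ force $\omega(e_1,e_1)=\omega(e_1,e_2)=0$, while the invariance condition evaluated at $(e_2,e_1)$ and $(e_2,e_2)$ gives $c^{2}_{21}=0$ and $c^{2}_{22}=c^{1}_{21}$, which is exactly the displayed family.

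The shortcoming is that, as written, your text is a plan rather than a proof: none of the seven linear systems is actually written down or solved, and for a statement of this kind the explicit case-by-case computation \emph{is} the entire content, so deferring it to ``elementary bookkeeping'' leaves the theorem unverified in six of the seven cases. You should also state explicitly the interpretive choice you are making: the paper's definition of a Connes cocycle takes $B$ to be a scalar-valued bilinear form, for which the condition $\alpha\circ B=B\circ\alpha^{\otimes 2}$ is meaningless, whereas the theorem's table (and your computation) treats $\omega$ as $\mathscr{A}$-valued; adopting the vector-valued reading is the only way the statement makes sense, but it needs to be said, since the two readings give different solution spaces (the scalar-valued reading would drop the invariance constraints that produce, e.g., the relation $c^{2}_{22}=c^{1}_{21}$ in $\mathscr{A}_1$).
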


\begin{thm}
The Connes cocycle of 3-dimensional Hom-Rhizaform algebras is given by the following representatives:

$
\mathscr{A}_{1}:
    \begin{array}{ll}
     \omega(e_1,e_3)=c_{13}^1e_1\\
		  \omega(e_2,e_3)=c_{23}^1e_1\\
		  \omega(e_3,e_3)=c_{33}^1e_1
    \end{array}$
$\mathscr{A}_{2}:
    \begin{array}{ll}
     \omega(e_1,e_2)=c_{12}^1e_1\\
		  \omega(e_2,e_2)=c_{22}^1e_1\\
		  \omega(e_3,e_2)=c_{32}^1e_1
    \end{array}$
$
\mathscr{A}_{3}:
    \begin{array}{ll}
     \omega(e_1,e_2)=c_{12}^1e_1\\
		    \omega(e_2,e_2)=c_{22}^1e_1\\
		  \omega(e_3,e_2)=c_{32}^1e_1+c_{31}^1e_2\\
			 \omega(e_3,e_3)=c_{33}^3e_3
    \end{array}$	
$
\mathscr{A}_{4}:
    \begin{array}{ll}
     \omega(e_1,e_2)=c_{12}^1e_1&\\
		    \omega(e_2,e_2)=c_{22}^1e_1&\\
		  \omega(e_3,e_2)=c_{32}^1e_1&
    \end{array}$				
	$
\mathscr{A}_{5}:
    \begin{array}{ll}
     \omega(e_1,e_3)=c_{13}^3e_3&\\
		 \omega(e_2,e_3)=c_{23}^3e_3&\\
		  \omega(e_3,e_3)=c_{33}^3e_3&
    \end{array}$
$
\mathscr{A}_{6}:
    \begin{array}{ll}
     \omega(e_1,e_3)=c_{13}^3e_3&\\
		  \omega(e_2,e_3)=c_{23}^3e_3&\\
		  \omega(e_3,e_1)=c_{31}^3e_3&\\
			\omega(e_3,e_2)=c_{32}^3e_3&\\
			\omega(e_3,e_3)=c_{33}^3e_3&
    \end{array}$
$
\mathscr{A}_{7}:
    \begin{array}{ll}
     \omega(e_i,e_j)=0&
    \end{array}$		
		$
\mathscr{A}_{8}:
    \begin{array}{ll}
     \omega(e_i,e_j)=0&
    \end{array}$
			
$
\mathscr{A}_{9}:
    \begin{array}{ll}
     \omega(e_1,e_3)=c_{13}^3e_3&\\
		  \omega(e_2,e_3)=c_{23}^3e_3&\\
		  \omega(e_3,e_1)=c_{31}^3e_3&\\
			\omega(e_3,e_2)=c_{32}^3e_3&\\
			\omega(e_3,e_3)=c_{33}^3e_3&
    \end{array}$
$
\mathscr{A}_{10}:
    \begin{array}{ll}
     \omega(e_1,e_3)=c_{13}^3e_3&\\
		  \omega(e_2,e_3)=c_{23}^3e_3&\\
			\omega(e_3,e_3)=c_{33}^3e_3&
    \end{array}$
$
\mathscr{A}_{11}:
    \begin{array}{ll}
     \omega(e_1,e_2)=c_{12}^2e_2&\\
		  \omega(e_2,e_2)=c_{22}^2e_2&\\
		  \omega(e_3,e_1)=c_{31}^1e_1+c_{31}^3e_3&\\
			\omega(e_3,e_2)=c_{32}^3e_2&\\
			\omega(e_3,e_3)=c_{33}^1e_1+c_{33}^3e_3&
    \end{array}$
									
$
\mathscr{A}_{12}:
    \begin{array}{ll}
     \omega(e_1,e_2)=c_{12}^2e_2&\\
		  \omega(e_2,e_2)=c_{22}^2e_2&\\
		  \omega(e_3,e_1)=c_{31}^1e_1+c_{31}^3e_3&\\
			\omega(e_3,e_2)=c_{32}^3e_2&\\
			\omega(e_3,e_3)=c_{33}^1e_1+c_{33}^3e_3&
    \end{array}$
$\mathscr{A}_{13}:
    \begin{array}{ll}
     \omega(e_1,e_1)=c_{11}^1e_1\\
		  \omega(e_2,e_1)=c_{21}^1e_1\\
			\omega(e_3,e_1)=c_{31}^1e_1
    \end{array}$
$\mathscr{A}_{14}:
    \begin{array}{ll}
     \omega(e_1,e_1)=c_{11}^1e_1\\
		  \omega(e_2,e_1)=c_{21}^1e_1\\
			\omega(e_3,e_1)=c_{31}^1e_1
    \end{array}$		

$\mathscr{A}_{15}:
    \begin{array}{ll}
     \omega(e_1,e_1)=c_{11}^1e_1+c_{11}^3e_3\\
		\omega(e_1,e_2)=c_{12}^2e_2\\
		\omega(e_1,e_3)=c_{13}^1e_1+c_{13}^3e_3\\
    \end{array}	
		 \begin{array}{ll}
     \omega(e_3,e_1)=c_{31}^1e_1+c_{31}^3e_3\\
		\omega(e_3,e_2)=c_{32}^2e_2\\
		\omega(e_3,e_3)=c_{33}^1e_1+c_{33}^3e_3.
    \end{array}$
							
$\mathscr{A}_{16}:
    \begin{array}{ll}
     \omega(e_1,e_1)=c_{11}^1e_1+c_{11}^2e_2+c_{11}^3e_3\\
		\omega(e_1,e_2)=c_{12}^1e_1+c_{12}^2e_2+c_{12}^3e_3\\
		\omega(e_1,e_3)=c_{13}^1e_1+c_{13}^2e_2+c_{13}^3e_3\\
    \end{array}	
		 \begin{array}{ll}
     \omega(e_3,e_1)=c_{31}^1e_1+c_{31}^2e_2+c_{31}^3e_3\\
		\omega(e_3,e_2)=c_{32}^1e_1+c_{32}^2e_2+c_{32}^3e_3\\
		\omega(e_3,e_3)=c_{33}^1e_1+c_{33}^2e_2+c_{33}^3e_3.
    \end{array}$							
\end{thm}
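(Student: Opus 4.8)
The plan is to treat the sixteen algebras $\mathscr{A}_1,\dots,\mathscr{A}_{16}$ of the previous theorem one at a time. Fix such a representative $(\mathscr{A},\succ,\prec,\alpha)$ with basis $(e_1,e_2,e_3)$. First pass to its associated Hom-anti-associative algebra $(\mathscr{A},\ast,\alpha)$ by setting $e_i\ast e_j=e_i\succ e_j+e_i\prec e_j$, so that the displayed multiplication tables yield structure constants $\mu_{ij}^{k}$ with $e_i\ast e_j=\sum_k\mu_{ij}^{k}e_k$, together with $\alpha(e_i)=\sum_k\alpha_{ki}e_k$. Writing an unknown bilinear form $\omega(e_i,e_j)=\sum_k c_{ij}^{k}e_k$, the two defining conditions of a Connes cocycle become the finite linear systems
\begin{align*}
&\sum_{p,q}\big(\mu_{ij}^{p}\alpha_{qk}+\mu_{jk}^{p}\alpha_{qi}+\mu_{ki}^{p}\alpha_{qj}\big)c_{pq}^{r}=0,\\
&\sum_{k}\alpha_{rk}\,c_{ij}^{k}=\sum_{p,q}\alpha_{pi}\,\alpha_{qj}\,c_{pq}^{r},
\end{align*}
ranging over all indices $i,j,k,r$; the first encodes the cyclic identity $\omega(a\ast b,\alpha(c))+\omega(b\ast c,\alpha(a))+\omega(c\ast a,\alpha(b))=0$, the second the invariance $\alpha\circ\omega=\omega\circ\alpha^{\otimes 2}$. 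Solving this system over $\mathbb{F}$ gives the general Connes cocycle, whose free parameters are exactly the constants $c_{ij}^{k}$ recorded in the corresponding entry of the theorem.

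Carrying this out, I would exploit that in each of the sixteen algebras the product $\ast$ is supported on very few pairs of basis vectors and, in most cases, takes values in a one-dimensional subspace, so the cyclic system collapses to a short list of scalar relations among the $c_{ij}^{k}$. The invariance system is controlled entirely by the matrix $(\alpha_{ij})$, which here is either the identity (as for $\mathscr{A}_{16}$, where invariance is automatic and only the cyclic identity constrains $\omega$), or has a single off-diagonal entry (as for $\mathscr{A}_5,\dots,\mathscr{A}_{12}$, with $\alpha(e_2)=e_1+e_2$ or $\alpha(e_3)=e_2+e_3$), or carries a scaling (as for $\mathscr{A}_{15}$, with $\alpha(e_2)=-2e_2$); I would therefore group the cases by the shape of $\alpha$ so that the invariance computation is reused. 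These off-diagonal entries are exactly what produces the linkages visible in the statement, such as the common coefficient $c_{21}^{1}$ appearing in both $\omega(e_2,e_1)$ and $\omega(e_2,e_2)$ in the two-dimensional companion result; the non-multiplicative representatives carrying a parameter $\eta$ are handled identically, the value of $\eta$ dropping out of the constraints on $\omega$.

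The obstacle is bookkeeping rather than any single hard step: for each of the sixteen algebras one must (a) check by direct substitution that the displayed $\omega$ satisfies both defining identities, and (b) verify that the homogeneous linear system admits no further solutions, i.e. that every component $c_{ij}^{k}$ not listed is forced to vanish. The most delicate cases are those in which the cyclic identity, combined with a non-diagonal $\alpha$, collapses the whole space and forces $\omega\equiv 0$, as for $\mathscr{A}_7$ and $\mathscr{A}_8$; and the non-multiplicative algebras, where $\alpha$ is not an endomorphism of $\ast$, so the cyclic and invariance systems are genuinely independent and both must be imposed. A partial cross-check on each computation is that any nondegenerate $\omega$ obtained must, via Equation~\eqref{eq000}, recover a Hom-rhizaform structure $\succ,\prec$ on $(\mathscr{A},\ast,\alpha)$ through the associated $\mathcal{O}$-operator $T^{-1}$, which independently validates the solution of the linear system.
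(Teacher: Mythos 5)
Your overall strategy is the right one, and in fact it coincides with the only route the paper itself suggests: the paper gives no written proof of this theorem at all (like the other classification statements, it is presented as the outcome of the structure-constant computations set up at the start of the classification section), and your plan --- write $\omega(e_i,e_j)=\sum_k c_{ij}^k e_k$ for the associated product $\ast=\succ+\prec$, impose the cyclic identity and the $\alpha$-compatibility as homogeneous linear systems in the $c_{ij}^k$, and solve case by case over the sixteen representatives --- is exactly that implicit method, and your two displayed systems are the correct coordinate form of the two defining conditions. One point you should make explicit, though: the paper's definition of a Connes cocycle takes $B$ with values in the ground field, whereas the tables in the theorem (and your ansatz) treat $\omega$ as an $\mathscr{A}$-valued bilinear map; the resulting linear systems are different under the two readings, so you must state which convention you are actually verifying.

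The genuine gap is that the proposal stops at the plan. Since the theorem is nothing but an explicit list of solutions, the entire content of any proof is the case-by-case verification --- both that each displayed $\omega$ satisfies the two identities and that every unlisted component $c_{ij}^k$ is forced to vanish --- and none of the sixteen systems is actually solved here. Moreover, the structural observations you intend to organize the computation around are inaccurate: across the sixteen representatives $\alpha$ is not ``identity, one off-diagonal entry, or a scaling''; in most cases ($\mathscr{A}_1$--$\mathscr{A}_6$, $\mathscr{A}_9$--$\mathscr{A}_{14}$) $\alpha$ is non-invertible (a basis vector is sent to $0$, or the image is a proper subspace, e.g. $\alpha(e_2)=e_1$, $\alpha(e_3)=e_2$ in $\mathscr{A}_1$), and in $\mathscr{A}_9$--$\mathscr{A}_{12}$ there is no off-diagonal entry at all. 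This matters because the degeneracy of $\alpha$ in the compatibility condition is precisely what eliminates most components (as the tables reflect, with $\omega$ confined to a single line of values in many cases), so a grouping that overlooks the degenerate cases would misorganize the bulk of the work; it also undercuts your invertible-$\mathcal{O}$-operator cross-check, which applies only to nondegenerate cocycles while most of the listed ones are degenerate. Finally, the claim that the parameter $\eta$ in the non-multiplicative representatives ``drops out'' is asserted rather than checked. As written, this is a sound computational strategy but not yet a proof of the stated tables.
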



\begin{thebibliography}{99}

\bibitem{AO} Albevio, S., Omirov, B.A. and Rakhivomv, I.S., (2006), {\it Classification of 4 dimensional nilpotent complex, Leibniz algebra}, Extrata mathematica, 3(1):197-210.

\bibitem{Basdouri2025}
I. Basdouri, J. Lerbet, and B. Mosbahi, \textit{Quasi-centroids and quasi-derivations of low-dimensional Zinbiel algebras}, Adv. Stud. Euro-Tbilisi Math. J. \textbf{18} (2025), no. 2, 253--268.

\bibitem{BW} Basri W., Rakhimov I., Rikhsiboev I. {\it Classification of 3-Dimensional Complex Diassociative Algebras}, Malaysian Journal of Mathematical Sciences, Vol. 4, No. 2 (2010).

\bibitem{BR} Basri W., Rakhimov I., Rikhsiboev I. {\it Four-Dimensional Nilpotent Diassociative Algebras} Journal of Generalized Lie Theory and Applications, Vol. 9, No. 1 (2015).

\bibitem{GB}  G. Baxter {\it An analytic problem whose solution follows from a simple algebraic identity}, Pacific J. Math. 10 (1960), 731-742.

\bibitem{B} Blokh, {\it A generalization of the concept of a Lie algebra}, In Doklady Akademii Nauk (Vol. 165, No. 3, pp. 471-473) Russian Academy of Sciences. A. Y. (1965)


\bibitem{I} K. Ebrahimi-Fard, J. Gracia-Bondia and F. Patras {\it A Lie theoretic approach to renormalization}, Comm. Math. Phys. 276 (2007), 519-549.

\bibitem{F}L. Foissy {\it Generalized dendrifom algebras and typed binary trees}, preprint arXiv:2002.12120 (2020)


\bibitem{G} Graaf, W. A, {\it lassification of nilpotent associative algebras of small dimension},  C. Applicable Algebra in Engineering, Communication and Computing, 18(2), 175-198.
(2007).


\bibitem{LG} L. Guo {\it Operated monoids, Motzkin paths and rooted trees}, J. Algebraic Combin. 29 (2009), 35-62.

\bibitem{HL} J. T. Hartwig, D. Larsson and S. D. Silvestrov {\it  Deformations of Lie algebras using derivations}, J. Algebra, 292 (2006), 314-361.


\bibitem{HJ} Hongsopa, P., Janplee, P., Kimsang, C., Pongprasert, S., \& Sirasuntorn, N. (2024). {\it On Algorithms for Computing Derivations and Antiderivations of Leibniz Algebras},
 Science Essence Journal, 40(2), 45–57.


\bibitem{IA} Iroda Choriyeva, Abror Khudoyberdiyev, {\it Classification of five-dimensional symmetric Leibniz algebras},
 arXiv:$2303.01905v1$ [math.RA] 3 Mar 2023.
 

\bibitem{Makhlouf2020}
A. Makhlouf and A. Zahari, \textit{Structure and classification of Hom-associative algebras}, Acta Comment. Univ. Tartu. Math. \textbf{24} (2020), no. 1, 79--102.

\bibitem{Mosbahi2023}
B. Mosbahi, A. Zahari, and I. Basdouri, \textit{Classification, $\alpha$-inner derivations and $\alpha$-centroids of finite-dimensional complex Hom-trialgebras}, Pure Appl. Math. J. \textbf{12} (2023), no. 5, 86--97. https://doi.org/10.11648/j.pamj.20231205.12

\bibitem{Mansuroglu2025}
N. Mansuroğlu and B. Mosbahi, \textit{On structures of BiHom-superdialgebras and their derivations}, Adv. Appl. Math. Sci. \textbf{24} (2025), nos. 5--6, 27--41.


\bibitem{A} K. Uchino {\it Quantum analogy of Poisson geometry, related dendriform algebras and O-operators}, Lett. Math. Phys. 85 (2008), 91-109. 


\bibitem{R} Zafar Normatov {\it Rhizaform algebras}, arXiv:$2501.07302$ [math.RA] 3 Jan 2025


\bibitem{YZ1}Y. Y. Zhang and X. Gao {\it Free Rota-Baxter family algebras and (tri)dendriform family algebras}, Paci c J. Math. 301 (2019), 741-766.

\bibitem{YZ2}Y. Y. Zhang, X. Gao and D. Manchon {\it Free (tri)dendriform family algebras}, J. Algebra 547(2020), 456-493.

\bibitem{YD}Y. Y. Zhang and D. Manchon{\it Free pre-Lie family algebras}, preprint, arXiv:2003.00917(2020).

\bibitem{Zahari2023}
A. Zahari and I. Bakayoko, \textit{On BiHom-associative dialgebras}, Open J. Math. Sci. \textbf{7} (2023), 96--117.

\bibitem{Zahari2025}
A. Zahari and A. Sania, \textit{Classification of 3-dimensional Bihom-Associative and Bihom-Bialgebras}, Open J. Math. Sci. \textbf{9} (2025), 182--201.



\end{thebibliography}
\end{document}